\newcommand{\DS}{\displaystyle}
\newcommand {\R} {{\mathbb{R}}}
\newcommand {\Rn} {\mathbb{R}^n}
\newcommand {\Rl} {\mathbb{R}^l}
\newcommand {\be}{{\beta_\varepsilon}}
\newcommand {\Rp} {\mathbb{R}^+}
\newcommand {\co} {\mathcal{O}}
\newcommand {\ob}{\overline{\mathcal{O}}}
\newcommand {\UC}{U\!C(\Rn)}
\newcommand {\C} {\mathcal{C}}
\newcommand {\A} {\mathcal{A}}
\newcommand {\F} {\mathcal{F}}
\newcommand {\M} {\mathcal{M}}
\newcommand {\Ell} {\mathcal{L}}
\newcommand {\E} {\mathbb{E}}
\newcommand {\ve} {\varepsilon}
\newcommand {\Prob} {\mathbb{P}}
\newcommand {\toinf} {{\to \infty}}
\newcommand {\vf}{\varphi}
\newcommand {\bF}{\mathbb{F}}
\newcommand {\veps}{{v^\varepsilon}}
\newcommand {\vek}{{v^{\varepsilon_k}}}
\newcommand {\bek}{{\beta_{\varepsilon_k}}}
\newcommand {\gek}{{g^{\varepsilon_k}}}
\DeclareMathOperator{\ess}{ess}
\DeclareMathOperator{\loc}{loc}
\DeclareMathOperator{\tr}{tr}
\DeclareMathOperator{\dist}{dist}
\DeclareMathOperator{\diam}{diam}
\newtheorem{remark}{Remark}
\numberwithin{equation}{section}
\begin{document}

\author{Mark H. A. Davis\thanks{Department of Mathematics, Imperial College London, London SW7 2AZ, UK. Email address: \texttt{mark.davis@imperial.ac.uk}}
\and Xin Guo\thanks{Department of Industrial Engineering
and Operations Research, University of California at Berkeley, CA 94720-1777. Email address: \texttt{xinguo@ieor.berkeley.edu}}  \and Guoliang Wu\thanks{Department of Mathematics, University of California at Berkeley, CA 94720-3840. Email address: \texttt{guoliang@math.berkeley.edu}}}

\title{Impulse Control of Multidimensional Jump Diffusions}

\maketitle
\begin{abstract} This paper studies  regularity property of the value function
for an infinite-horizon discounted cost impulse control problem,  where the
underlying controlled process is a multidimensional jump diffusion with possibly
`infinite-activity' jumps. Surprisingly, despite these jumps,
 we  obtain the same degree of regularity as for the diffusion case, at least when the jump satisfies certain integrability conditions.
\end{abstract}

\section{Introduction}

This paper is concerned with regularity of the value function in an
impulse control problem for an $n$-dimensional jump diffusion
process $X(t)$.

In the absence of control,  the stochastic process
$X(t)$ is governed by the following SDE:
\begin{equation} \label{Eq. dynamics}
dX(t)=\mu(X(t^-)) dt+\sigma(X(t^-)) dW(t)+\int_{\Rl}
j(X(t^-),z)\widetilde N(dt, dz), \ \ X(0)=x.
\end{equation}
Here $W(t)$  is an $m$-dimensional Brownian motion and $N(\cdot,\cdot)$
 a \emph{Poisson random measure}
 on $\Rp\times\Rl$, with $W$ and $N$ independent.  The \emph{L\'evy measure}
$\nu(\cdot):=\E(N(1,\cdot))$ may be unbounded and $\widetilde N(dt, dz)$ is its
\emph{compensated Poisson random measure} with
$\widetilde N(dt, dz):=N(dt, dz)-\nu(dz)dt.$
The parameters $b, \sigma, j$ satisfy appropriate
conditions (see Section \ref{sec assumptions}) to ensure
the well-definedness of this SDE.

If an admissible control policy
$V=(\tau_1,\xi_1; \tau_2,\xi_2; \ldots)$ is adopted, then $X(t)$
evolves as
\begin{align}
  dX(t)=\mu(X(t^-)) dt+\sigma(X(t^-)) dW(t)+\int_{\Rl} j(X(t^-),z)\widetilde N(dt, dz)+ \sum_i
  \delta(t-\tau_i)\xi_i,
  \label{eqn controlled jump}
\end{align}
where $\delta(\cdot)$ denotes the Dirac delta function. With this given
control, the associated
total expected cost (objective function) is
\begin{align}
  J_x[V]:=\E_x\left(\int_0^\infty e^{-rt}f(X(t))dt+\sum_{i=1}^\infty e^{-r\tau_i}B(\xi_i)\right).
  \label{def of J (total cost)}
\end{align}
  The aim is to minimize the total
cost over all admissible control policies, with the value function
\begin{align}
  u(x)=\inf_{V} J_x[V].
  \label{def of u (value function)}
\end{align}

\paragraph{HJB and Regularity}

 A heuristic derivation from the Dynamic Programming Principle shows that
the value function (\ref{def of u (value function)}) is associated with the following Quasi-Variational-Inequality, or
HJB, by
\begin{equation}\label{HJB}\tag{HJB}
  \max(\Ell u-f, u-\M u)=0 \quad\text{ in }\Rn.
\end{equation}
where $\M\varphi(x)$ is the so called minimal operator such that
\begin{equation}
\M\varphi(x)=\inf_{\xi\in\Rn}(\varphi(x+\xi)+B(\xi)),\label{def of M}
\end{equation}
and $\Ell \varphi(x)$ is the partial integro-differential operator
\begin{equation}
\Ell \varphi(x)=
- \tr \left[A\cdot D^2\varphi(x)\right]- \mu(x)\cdot D\varphi(x)+r \vf(x) +
\int_{\Rl} \left[\varphi(x+j(x, z))-\vf(x)-j(x,z)D\varphi(x)\right]\nu(dz)\label{def of Ell},
\end{equation}
where the matrix $A$ is given by $A=\left(a_{ij}\right)_{n\times n}=\frac 1 2 \sigma(x)\sigma(x)^\textsf{T}$.
Most recently   \cite{Seydel09} proves rigorously  that
indeed the value function is a continuous solution to (\ref{HJB}) in a viscosity sense.

Nevertheless, an important question remains: under what conditions is the value function a solution to the
(\ref{HJB}) in a classical sense? Or, what is the degree of the smoothness (i.e. regularity property) for the value function in general?
This is the focus of our paper.

Regularity property has been  one of the central  topics in PDEs theory \cite{LU68, GT98, Lieberman96}. Besides its obvious and natural theoretical interest, regularity
study provides useful insight for numerical approximation.
 Controls of the impulse type by allowing discrete state space and  fixed cost proves most desirable for application purpose. See
\cite{CR78, HT83, HST83, Taksar85}
for risk
management, \cite{TH90, MT94} for real options, \cite{BP00, Korn98,
Korn99, EH88, MP95, OS02} for transaction cost in portfolio
management, \cite{JS95, CCTZ06} for insurance models, \cite{VMP07, BP09}
for liquidity risk, and \cite{Jeanblanc93, MO98, CZ99} for optimal control
of exchange rates.
Meanwhile,  jump diffusions such as L\'evy processes, have been very popular in financial modeling. See for example \cite{Pham97, Kallsen00, Eberlein01, CGMY03, CT04, YJB06, LM08}.

Combined, there is a growing interest and need to analyze impulse controls on jump diffusions. Unfortunately, impulse control is among the hardest to analyze
and the regularity study for the associated HJB or the value function is largely
open, except for some special and degenerate cases such as
 singular control and optimal stopping problems, see \cite{Ma92, Pham04, GT07, BX09}.
 One of the difficulty in establishing the regularity property
lie in the non-linear, non-local operator $\M u$ in Eqn (\ref{HJB}),
 Another difficulty is the partial integro-differential operator  $\Ell \varphi(x)$
associated with the jump processes.  For the special case when the
controlled diffusion is without jumps,  
\cite{BL82} established the regularity property
by assuming that the control is bounded and non-negative with additional smoothness in the cost structure.  Recently, 
\cite{GW09} applies the tricks of  translating the
regularity of the minimal operator in the action
region into that of the PDEs in the continuation region.   However, all these technique fail for a controlled  jump
diffusion component. The  major issue is the additional partial integro-differential operator. Moreover, (possibly infinite) jumps through the
boundary might potentially reduces the degree of smoothness for  the value
function.

\paragraph{Our work} This paper investigates the regularity of the value functions for
the jump-diffusion models with impulse control.  Building on the existence  of the viscosity solution to the HJB for the value function \cite{Seydel09} and the trick of
\cite{GW09} for the non-local minimal operator,  we focus on the
partial integro-differential operator in the continuation region. There are two distinct cases:  when the jump is
driven by compound Poisson process, or equivalently when the L\'evy
measure is finite, the analysis is fairly straightforward by
the standard Schauder's estimate from PDEs, as in \cite{GW09}. For the most interesting
case of infinite L\'evy measure, the key is to  combine the classical
$L^p$ theory with the ``bootstrap'' argument to obtain regularity of
the partial integro-differential operator. Finally, to deal with the regularity along the ``free boundary'', appropriate penalty function is devised. Surprisingly, despite the added possibly infinite jumps,
 we have here the same  regularity as in the diffusion case, at least when the jump satisfies certain integrability conditions.

\section{Assumptions and Notations}\label{sec assumptions}
We first specify the exact mathematical framework for our problem. Given a filtered and complete probability space $(\Omega, \F,
(\F_t)_{t\ge0}, \Prob)$ satisfying the usual conditions, we have a controlled jump diffusion process $X(t)$ as defined above at (\ref{eqn controlled jump}).
An \emph{admissible impulse control} $V$ consists of a sequence of
stopping times $\tau_1, \tau_2,\ldots$ with respect to $\F_t$ and a corresponding sequence
of $\Rn$-valued random variables $\xi_1, \xi_2,\ldots$ satisfying
the conditions
\begin{align*}
  \begin{cases}
    0< \tau_1< \tau_2<\cdots< \tau_i< \ldots, \\
    \tau_i\toinf \text{ a.s. as }i\toinf, \\
    \xi_i\in\F_{\tau_i}, \quad\forall i\ge 1.
  \end{cases}
\end{align*}
The associated total expected cost (objective function) is given by (\ref{def of u (value function)})
where  $f$ is the ``running cost'', $B$ is the ``transaction cost'' and $r>0$ is the discount factor. We assume that all the randomness comes from $W$ and $N$, so that
the filtration $\bF=(\F_t)_{t\ge 0}$ is generated by $W$ and
$N$.

We next  specify detailed conditions on the coefficients to ensure
the existence and uniqueness of \eqref{Eq. dynamics}, as well as the
conditions on $f$ and $B$ in \S\ref{sec assumptions}.

 Throughout this paper, we shall
impose the following standing assumptions:
\begin{itemize}
  \item [\textbf{(A1)}] Lipschitz conditions on $\mu: \Rn\to\Rn, \sigma: \Rn\to \R^{n\times m}, j: \Rn\times \Rl\to\R^n$: there exist   constants $C_\mu, C_\sigma>0$ and a positive function $C_j(\cdot)\in L^1\cap L^2(\Rl,\nu)$ such that
\begin{equation}\label{Lipschitz mu sigma}
  \begin{cases}
  |\mu(x)-\mu(y)|\le C_\mu|x-y|,\\
  \|\sigma(x)-\sigma(y)\|\le C_\sigma|x-y|,\\
  |j(x, z)-j(y,z)|\le C_j(z)|x-y|,
  \end{cases}
 \quad\forall x, y\in\Rn, z\in\Rl.
\end{equation}
Assume also that
\begin{equation} j(x,\cdot)\in L^1(\Rl; \nu) \quad\mbox{for every }x\in\Rn.\label{jcond}\end{equation}

\item [\textbf{(A2)}] Ellipticity: There exists a constant $\lambda>0$ such that
\begin{equation}\label{ellipticity}
a_{ij}(x)\xi_i\xi_j\ge \lambda |\xi|^2, \quad \forall x,\xi\in\Rn,
\end{equation}
where the matrix $A=\left(a_{ij}\right)_{n\times n}=\frac 1 2 \sigma(x)\sigma(x)^\textsf{T}$.

  \item [\textbf{(A3)}] Lipschitz condition on the running cost $f\ge0$: there exists a constant $C_f>0$ such that
\begin{equation}\label{Lipschitz f}
|f(x)-f(y)|\le C_f|x-y|, \quad\forall x, y\in\Rn.
\end{equation}

  \item [\textbf{(A4)}] Conditions on the transaction cost function $B:\Rn\to
  \R$:
      \begin{align}
\begin{cases}
    \DS\inf_{\xi\in\Rn}B(\xi)=K>0,\\
    B\in C(\Rn\backslash\{0\}),   \\
      |B(\xi)|\toinf, \mbox{ as } |\xi|\toinf, \mbox{ and}\\
    B(\xi_1)+B(\xi_2)\ge B(\xi_1+\xi_2)+K,\quad \forall \xi_1, \xi_2\in\Rn.
    \end{cases}\label{conditions on B}
\end{align}

\item [\textbf{(A5)}]  $r>2C_\mu+C_\sigma^2+\int_{\Rl}C_j^2(z)\nu(dz)$.
\end{itemize}

Assumption \textbf{(A1)} ensures the existence and uniqueness of solutions to \eqref{Eq. dynamics} (Cf. Theorem 9.1, Chapter VI, \cite{IkedaWatanabe89}). 
The condition \eqref{jcond} seems essential to our approach, in particular
to establish the continuity property of the operator $I$ in Lemma \ref{lem
continuous Iu}.
 Readers are referred to \cite{GS79} or \cite{OS04} for a more detailed
discussion of L\'evy processes and jump diffusions.

In view of Assumption ({\bf A1}) the following definitions for operators $L,I$
make sense.
\begin{equation}L\varphi(x)=- \tr \left[A\cdot D^2\varphi(x)\right]- \bar\mu(x)\cdot D\varphi(x)+r \vf(x),\label{L-def}
\end{equation}
where $\bar{\mu}=\mu-\int_{\Rl}j(x,z)\nu(dz)$ and, for Lipschitz continuous
functions $\varphi$,
\begin{equation}
I\varphi(x)=\int_{\Rl} \left[\varphi(x+j(x, z))-\vf(x)\right]\nu(dz)\label{I-def}
\end{equation}
We also adopt the following  standard notations for function
spaces:
\begin{align*}
  \UC&= \text{space of all uniformly continuous functions on $\Rn$},\\
  W^{k, p}(U)&= \text{space of all $L^p$ functions with $\beta$-th weak partial}\\
  &\quad\text{ derivatives  belonging to $L^p$}, \forall |\beta|\le k, \\
  W^{k,p}_0(U) &= \text{the closure, in $W^{k,p}$-norm, of smooth functions with compact support in $U$,}\\
  W^{k,p}_{\loc}(U)&=\{f\in W^{k,p}(U'), \forall \text{ compact } U'\subset U\},\\
  C^{k,\alpha}(D)&=\left\{f\in C^k(D): \sup_{x, y\in D\atop x\ne y}\left\{\frac{|D^\beta f(x)-D^\beta f(y)|}{|x-y|^\alpha}\right\}<\infty, \forall |\beta|\le k\right\}, D\text{ compact.}
\end{align*}

\section{Preliminary Results}
We first establish some preliminary results under the assumptions \textbf{(A1)-(A5)}.

\begin{lemma}
The value function $u(\cdot)$ defined by \eqref{def of u (value function)} is Lipschitz.
\end{lemma}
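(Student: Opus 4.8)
The plan is to estimate the difference $|u(x) - u(y)|$ directly by a coupling/near-optimal-control argument. Fix $x, y \in \Rn$ and $\delta > 0$, and pick an admissible control $V = (\tau_1, \xi_1; \tau_2, \xi_2; \dots)$ that is $\delta$-optimal for the starting point $x$, i.e. $J_x[V] \le u(x) + \delta$. The natural idea is to \emph{use the same control $V$} for the process started at $y$: let $X^x(t)$ and $X^y(t)$ denote the two controlled trajectories driven by the same $W$ and $N$ and subjected to the same impulses $(\tau_i, \xi_i)$. Since the impulses $\xi_i$ are added identically to both processes and the transaction-cost terms $\sum_i e^{-r\tau_i} B(\xi_i)$ are then \emph{identical} for the two trajectories, we obtain
\begin{equation*}
u(y) - u(x) \le J_y[V] - J_x[V] + \delta = \E\left( \int_0^\infty e^{-rt}\bigl( f(X^y(t)) - f(X^x(t)) \bigr)\, dt \right) + \delta.
\end{equation*}
By the Lipschitz assumption \textbf{(A3)} on $f$, the right-hand side is bounded by $C_f \int_0^\infty e^{-rt}\, \E |X^y(t) - X^x(t)|\, dt + \delta$. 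Swapping the roles of $x$ and $y$ gives the reverse inequality, so everything reduces to an estimate $\E|X^y(t) - X^x(t)| \le C e^{\gamma t}|x-y|$ for a suitable growth rate $\gamma$, together with the requirement $\gamma < r$ so that the time integral converges.

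The key step is therefore the moment estimate for the difference $D(t) := X^y(t) - X^x(t)$. Because the impulses cancel exactly in the difference, $D(t)$ satisfies (between and across impulse times) the SDE obtained by subtracting the two copies of \eqref{eqn controlled jump}, with no jump in $D$ at the times $\tau_i$; thus $D(t)$ solves
\begin{equation*}
dD(t) = \bigl(\mu(X^y(t^-)) - \mu(X^x(t^-))\bigr)\, dt + \bigl(\sigma(X^y(t^-)) - \sigma(X^x(t^-))\bigr)\, dW(t) + \int_{\Rl} \bigl(j(X^y(t^-),z) - j(X^x(t^-),z)\bigr)\, \widetilde N(dt,dz),
\end{equation*}
with $D(0) = y - x$. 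I would apply It\^o's formula to $e^{-\rho t}|D(t)|^2$, use the Lipschitz bounds from \textbf{(A1)} (the $dt$ drift contributes $\le 2C_\mu |D|^2$, the Brownian quadratic-variation term contributes $\le C_\sigma^2 |D|^2$, and the compensated-jump quadratic-variation term contributes $\le \int_{\Rl} C_j^2(z)\,\nu(dz)\, |D|^2$, using the isometry for $\widetilde N$), take expectations to kill the martingale parts, and then choose $\rho = 2C_\mu + C_\sigma^2 + \int_{\Rl} C_j^2(z)\,\nu(dz)$ so that Gronwall gives $\E|D(t)|^2 \le e^{\rho t}|x-y|^2$, hence $\E|D(t)| \le e^{\rho t/2}|x-y|$ by Jensen. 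This is exactly where assumption \textbf{(A5)}, namely $r > 2C_\mu + C_\sigma^2 + \int_{\Rl} C_j^2(z)\,\nu(dz) = \rho$, is used: it guarantees $\int_0^\infty e^{-rt} e^{\rho t/2}\, dt < \infty$ (indeed $r > \rho > \rho/2$), so
\begin{equation*}
|u(x) - u(y)| \le C_f |x-y| \int_0^\infty e^{-(r - \rho/2)t}\, dt + \delta = \frac{C_f}{r - \rho/2}\, |x-y| + \delta,
\end{equation*}
and letting $\delta \to 0$ yields the Lipschitz bound with constant $C_f / (r - \rho/2)$.

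The main obstacle I anticipate is not the It\^o estimate itself but making the coupling argument rigorous: one must check that the same control $V$ is genuinely admissible for the $y$-process (the stopping times $\tau_i$ and the $\F_{\tau_i}$-measurability of $\xi_i$ are defined with respect to the filtration generated by $W$ and $N$, which is shared by both processes, so this is fine), that the objective functionals $J_x[V]$ and $J_y[V]$ are finite (which follows from linear growth of $f$ and $B \ge K > 0$ forcing finitely many impulses in any $\delta$-optimal policy, or can be sidestepped by first noting $u$ is finite), and that the interchange of expectation and the infinite time integral is justified (Tonelli, since $f \ge 0$). A secondary technical point is the application of It\^o's formula to a process with countably many jumps accumulating only at infinity together with the compensated Poisson integral; this is standard under \textbf{(A1)} but should be invoked with care, e.g. by a localization/stopping argument letting the localizing time tend to infinity and using the moment bound to pass to the limit.
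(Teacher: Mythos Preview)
Your proposal is correct and follows essentially the same route as the paper: couple the two controlled processes under a common admissible control so that the impulse costs cancel, apply It\^o's formula to the squared difference $|X^y(t)-X^x(t)|^2$, use the Lipschitz bounds from \textbf{(A1)} and Gronwall to get an exponential moment estimate, and then invoke \textbf{(A5)} so that the discounted integral converges. The only (inessential) differences are that the paper phrases the final step as ``for arbitrary $V$'' rather than via a $\delta$-optimal control, and that your use of Jensen to pass from $\E|D|^2$ to $\E|D|$ yields the sharper growth rate $e^{\rho t/2}$ (and hence the constant $C_f/(r-\rho/2)$), whereas the paper records the cruder bound $e^{Ct}$ and constant $C_f/(r-C)$.
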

\begin{proof}
Given an admissible control $V$, and two initial states $x_1, x_2$, denote by $X^i(t)$ the solution of \eqref{Eq. dynamics}. Apply It\^o formula (for jump diffusions) (Theorem 5.1, Chapter II, \cite {IkedaWatanabe89}) to $Y(t)=|Z(t)|^2$, where $Z(t)=X^1(t)-X^2(t)$,
\begin{align*}
dY(t)&=2Z(t)\cdot [(\mu(X^1(t))-\mu(X^2(t)))dt+(\sigma(X^1(t))-\sigma(X^2(t)))dW]\\
&\quad+ (\sigma(X^1(t))-\sigma(X^2(t)))(\sigma(X^1(t))-\sigma(X^2(t)))^\textsf{T}dt\\
&\quad-\int_{\Rl} 2(j(X^1(t),z)-j(X^2(t),z))Z(t^-)\nu(dz)dt\\
&\quad+\int_{\Rl} [(Z(t^-)+j(X^1(t),z)-j(X^2(t),z))^2-|Z(t^-)|^2]N(dt,dz)
\end{align*}
Integrating from 0 to $t$, taking the expectation and then using Assumption \textbf{(A1)}, we obtain
\begin{align*}
\E Y(t)-(x_1-x_2)^2&\le \left(2C_\mu+C_\sigma^2+\int_{\Rl}C_j^2(z)\nu(dz)\right)\int_0^t\E Y(s)ds,
\end{align*}
which implies that $\E |X^1(t)-X^2(t)|\le e^{Ct}|x_1-x_2|$ by Gronwall's inequality, where $C=2C_\mu+C_\sigma^2+\int_{\Rl}C_j^2(z)\nu(dz)$. Hence $J_{x_1}[V]-J_{x_2}[V]\le C_u|x_1-x_2|$ by Assumptions \textbf{(A3)} and \textbf{(A5)}, where
\[C_u:=\frac{C_f}{r-[2C_\mu+C_\sigma^2+\int_{\Rl}C_j^2(z)\nu(dz)]}>0.\]

By the arbitrariness of $V$,
  \[u(x_1)\le J_{x_1}[V]\le J_{x_2}[V]+C_u|x_1-x_2|\Rightarrow u(x_1)\le u(x_2)+C_u|x_1-x_2|.\]
  Exchanging the roles of $x_1, x_2$ we get the desired result.
\end{proof}

\begin{lemma}\label{lem continuous Iu}
$Iu\in C(\Rn).$
\end{lemma}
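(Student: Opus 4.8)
The plan is to show that $Iu$ is continuous by splitting the integral defining $I$ into a "small jump" part near $z$-values where $j(x,z)$ is small and a "large jump" remainder, and to control each piece using the Lipschitz bound on $u$ together with the integrability assumption \eqref{jcond} and the properties of $C_j(\cdot)$ in \textbf{(A1)}. First I would fix $x_0\in\Rn$ and a sequence $x_k\to x_0$. For each $z$, the integrand $g_k(z):=u(x_k+j(x_k,z))-u(x_k)$ converges to $u(x_0+j(x_0,z))-u(x_0)$ pointwise in $z$, since $u$ is continuous (indeed Lipschitz, by the previous lemma) and $j(\cdot,z)$ is continuous in its first argument by the Lipschitz estimate $|j(x_k,z)-j(x_0,z)|\le C_j(z)|x_k-x_0|$.

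The next step is to produce a $\nu$-integrable dominating function so as to invoke dominated convergence. Using the Lipschitz constant $C_u$ of $u$, we have
\begin{equation*}
|g_k(z)|=|u(x_k+j(x_k,z))-u(x_k)|\le C_u\,|j(x_k,z)|.
\end{equation*}
Now $|j(x_k,z)|\le |j(x_0,z)|+C_j(z)|x_k-x_0|\le |j(x_0,z)|+C_j(z)\sup_k|x_k-x_0|$, and since $x_k\to x_0$ the quantity $R:=\sup_k|x_k-x_0|$ is finite. Hence
\begin{equation*}
|g_k(z)|\le C_u\bigl(|j(x_0,z)|+R\,C_j(z)\bigr)=:h(z),
\end{equation*}
and $h\in L^1(\Rl;\nu)$ because $j(x_0,\cdot)\in L^1(\Rl;\nu)$ by \eqref{jcond} and $C_j(\cdot)\in L^1(\Rl;\nu)$ by \textbf{(A1)}. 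By the dominated convergence theorem, $Iu(x_k)=\int_{\Rl}g_k(z)\,\nu(dz)\to\int_{\Rl}\bigl[u(x_0+j(x_0,z))-u(x_0)\bigr]\nu(dz)=Iu(x_0)$. Since the sequence $x_k\to x_0$ was arbitrary, $Iu$ is continuous at $x_0$, and since $x_0$ was arbitrary, $Iu\in C(\Rn)$.

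The only real subtlety — and the reason the hypothesis \eqref{jcond} is singled out in the text as "essential to our approach" — is securing the dominating function: one needs both that $j(x_0,\cdot)$ itself is $\nu$-integrable (not merely that $|j|^2$ is, which is what one typically gets from the SDE well-posedness) and that the Lipschitz modulus $C_j$ is $\nu$-integrable, so that the perturbation term $C_j(z)|x_k-x_0|$ does not destroy integrability. Both are exactly what \textbf{(A1)} and \eqref{jcond} provide. Everything else is routine: the pointwise convergence of the integrand is immediate from continuity of $u$ and of $j(\cdot,z)$, and one could alternatively phrase the argument with an $\varepsilon$-truncation of $\Rl$ outside a set where $h$ is small, but dominated convergence is the cleanest route.
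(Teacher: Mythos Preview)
Your proof is correct and follows essentially the same route as the paper: pointwise convergence of the integrand by continuity of $u$ and $j(\cdot,z)$, the Lipschitz bound $|u(y+j(y,z))-u(y)|\le C_u|j(y,z)|\le C_u(|j(x_0,z)|+C_j(z)|y-x_0|)$, and then dominated convergence using $j(x_0,\cdot),\,C_j(\cdot)\in L^1(\Rl;\nu)$. The only cosmetic difference is that the paper restricts to $|y-x_0|<1$ rather than invoking $R=\sup_k|x_k-x_0|$, and that your opening sentence mentions a small/large-jump split that you (rightly) never actually need.
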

\begin{proof}
Given $x\in\Rn$, $u(y+j(y,z))-u(y)\to u(x+j(x,z))-u(x)$, as $y\to x$, for any $z\in\Rl$. Observe that if  $|y-x|<1$,
\[|u(y+j(y,z))-u(y)|\le C_u|j(y,z)|\le C_u(|j(x,z)|+C_j(z)|x-y|)\le C_u(|j(x,z)|+C_j(z)),\]
where $C_u$ is the Lipschitz constant of $u$. Since $j(x,\cdot)$ and $C_j(\cdot)$ are both $\nu$-integrable, the dominated convergence theorem yields the desired result.
\end{proof}

For reference, we recall here Lemmas \ref{lem properties of M}-\ref{lem x+xi in C} which were proved in \cite{GW09}.
\begin{lemma}[Properties of $\M$]\label{lem properties of M}\mbox{}
\begin{enumerate}
  \item $\M$ is concave: for any $\varphi_1, \varphi_2\in C(\Rn)$ and $0\le s\le 1$,
      \[\M(s \varphi_1+(1-s)\varphi_2)\ge s \M\varphi_1+(1-s) \M\varphi_2.\]
  \item $\M$ is increasing: for any $\varphi_1\le\varphi_2$ everywhere,
      \[\M\varphi_1\le \M\varphi_2.\]
  \item $\M$ maps $\UC$ into $\UC$ and maps a Lipschitz function to a Lipschitz function. In particular, $\M u(\cdot)$ is Lipschitz continuous.
\end{enumerate}
\end{lemma}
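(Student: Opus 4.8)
The plan is to read parts (1) and (2) off directly from the defining formula (\ref{def of M}), and to obtain part (3) from an elementary ``stability of the modulus of continuity under infima'' principle.

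\emph{Parts (1) and (2).} For each fixed $\xi\in\Rn$ the map $\varphi\mapsto\varphi(\cdot+\xi)+B(\xi)$ is affine in $\varphi$: for $\varphi=s\varphi_1+(1-s)\varphi_2$, writing $B(\xi)=sB(\xi)+(1-s)B(\xi)$ gives $\varphi(x+\xi)+B(\xi)=s\bigl(\varphi_1(x+\xi)+B(\xi)\bigr)+(1-s)\bigl(\varphi_2(x+\xi)+B(\xi)\bigr)$. Taking the infimum over $\xi$ and using that the infimum of a sum dominates the sum of the infima yields $\M\varphi(x)\ge s\M\varphi_1(x)+(1-s)\M\varphi_2(x)$, which is (1). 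For (2): if $\varphi_1\le\varphi_2$ pointwise then $\varphi_1(x+\xi)+B(\xi)\le\varphi_2(x+\xi)+B(\xi)$ for every $\xi$, and this inequality survives passage to the infimum, giving $\M\varphi_1\le\M\varphi_2$. Neither step uses anything beyond the algebraic structure of $\M$.

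\emph{Part (3).} The core observation is: if $\{g_\xi\}_{\xi\in\Rn}$ is a family of real-valued functions on $\Rn$ sharing a common modulus of continuity $\omega$ (nondecreasing, $\omega(0^+)=0$), i.e.\ $|g_\xi(x)-g_\xi(y)|\le\omega(|x-y|)$ for all $x,y,\xi$, and if $g(x):=\inf_\xi g_\xi(x)$ is finite for each $x$, then $g$ admits the same modulus $\omega$. Indeed, fix $x,y$ and $\varepsilon>0$, choose $\xi$ with $g_\xi(y)\le g(y)+\varepsilon$, and estimate $g(x)\le g_\xi(x)\le g_\xi(y)+\omega(|x-y|)\le g(y)+\varepsilon+\omega(|x-y|)$; letting $\varepsilon\downarrow0$ and exchanging the roles of $x,y$ gives $|g(x)-g(y)|\le\omega(|x-y|)$. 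We apply this with $g_\xi(x)=\varphi(x+\xi)+B(\xi)$: since $|g_\xi(x)-g_\xi(y)|=|\varphi(x+\xi)-\varphi(y+\xi)|$, any modulus of continuity of $\varphi$ is a common modulus for the whole family, uniformly in $\xi$. Hence $\M\varphi$ inherits the modulus of continuity of $\varphi$; in particular $\M$ maps $\UC$ into $\UC$, and if $\varphi$ is Lipschitz with constant $L$ (modulus $\omega(t)=Lt$) then $\M\varphi$ is Lipschitz with constant at most $L$. Taking $\varphi=u$, which is Lipschitz by the previous lemma, shows $\M u$ is Lipschitz continuous.

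\emph{The main point to verify} is that $\M\varphi$ is actually finite (not identically $-\infty$), so that ``$\M\varphi\in\UC$'' is meaningful; note the argument above only invokes $\varepsilon$-optimal $\xi$, so no attainment of the infimum is required. Finiteness is immediate whenever $\varphi$ is bounded below, since then $\M\varphi\ge\inf\varphi+K>-\infty$ by \textbf{(A4)}; this covers the case of interest $\varphi=u\ge0$ (because $f\ge0$), giving $\M u\ge K>0$. For a general uniformly continuous $\varphi$, which has at most linear growth, finiteness of $\M\varphi$ would additionally require the coercivity $|B(\xi)|\to\infty$ in \textbf{(A4)} to dominate that linear growth; since only functions bounded below occur in the subsequent analysis, this is not a genuine obstruction here.
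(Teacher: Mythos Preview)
Your argument is correct. Parts (1) and (2) follow immediately from the affine-in-$\varphi$ structure of $\varphi(x+\xi)+B(\xi)$ and the elementary order properties of infima, exactly as you wrote; the ``common modulus survives the infimum'' principle you use for part (3) is the standard and cleanest way to obtain uniform continuity (respectively, the Lipschitz property) of $\M\varphi$ from that of $\varphi$.

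As for comparison: the present paper does not give its own proof of this lemma---it is simply quoted from \cite{GW09} (``For reference, we recall here Lemmas \ref{lem properties of M}--\ref{lem x+xi in C} which were proved in \cite{GW09}''). Your self-contained derivation is therefore more than what the paper supplies, and the approach is the natural one; the argument in \cite{GW09} proceeds along essentially the same lines. Your remark about finiteness is also well taken: the bare assumption $|B(\xi)|\to\infty$ in \textbf{(A4)} does not by itself force $\M\varphi>-\infty$ for an arbitrary $\varphi\in\UC$ with (possibly negative) linear growth, so the clause ``$\M$ maps $\UC$ into $\UC$'' should be read with the implicit understanding that $\M\varphi$ is finite. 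As you observe, every instance that matters here (in particular $\varphi=u\ge 0$, whence $\M u\ge K>0$) is covered by the ``bounded below'' case, so this does not affect anything downstream.
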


\begin{lemma}
  $u$ and $\M u$ defined as above satisfy $u(x)\le \M u(x)$ for all $x\in\Rn$. \label{lem u<=Mu}
\end{lemma}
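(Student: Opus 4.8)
The plan is to exploit the Dynamic Programming Principle, or more elementarily, a direct comparison of admissible controls. Fix $x \in \Rn$ and an arbitrary $\xi \in \Rn$. The idea is that applying an immediate impulse of size $\xi$ at time $\tau_1 = 0^+$ (or rather, to be careful about the strict inequality $\tau_1 > 0$, approaching this in the limit) moves the state from $x$ to $x + \xi$ at a cost $B(\xi)$, and thereafter one may follow any near-optimal control for the problem started at $x + \xi$. This should yield $u(x) \le B(\xi) + u(x+\xi)$, and taking the infimum over $\xi$ gives $u(x) \le \M u(x)$.

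To carry this out, I would first take, for given $\varepsilon > 0$, an admissible control $V' = (\tau'_1, \xi'_1; \tau'_2, \xi'_2; \dots)$ for the initial state $x + \xi$ with $J_{x+\xi}[V'] \le u(x+\xi) + \varepsilon$. Then I would construct, for small $h > 0$, a control $V_h$ for initial state $x$ that does nothing on $[0, h)$, applies the impulse $\xi$ at time $h$, and then applies the shifted control $V'$ (i.e.\ impulses $\xi'_i$ at times $h + \tau'_i$). This $V_h$ is admissible (all intervention times are strictly positive and increasing, tending to infinity). The cost is
\begin{align*}
  J_x[V_h] = \E_x\!\left(\int_0^h e^{-rt} f(X(t))\,dt\right) + e^{-rh}\E_x\!\left(B(\xi) + \E_{X(h)}\big[\text{cost of }V'\text{ started at }X(h)\big]\right),
\end{align*}
where $X(h)$ is the uncontrolled process \eqref{Eq. dynamics} started at $x$, run for time $h$. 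As $h \to 0$: the first term vanishes since $f$ is continuous (hence locally bounded) and the interval shrinks; $e^{-rh} \to 1$; and $X(h) \to x$ in $L^1$ (indeed $\E_x|X(h) - x| \to 0$ by the moment estimates underlying the previous lemma), so by the Lipschitz continuity of $u$ and a continuity/stability estimate for $J_{\cdot}[V']$ in the initial condition, $\E_x\big[J_{X(h)}[V']\big] \to J_{x+\xi}[V'] \le u(x+\xi) + \varepsilon$. Hence $u(x) \le \liminf_{h \to 0} J_x[V_h] \le B(\xi) + u(x+\xi) + \varepsilon$. Letting $\varepsilon \to 0$ and then taking $\inf_{\xi}$ gives the claim.

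The main obstacle is the passage $\E_x\big[J_{X(h)}[V']\big] \to J_{x+\xi}[V']$: one must control the dependence of the cost functional $J_y[V']$ on the starting point $y$ uniformly enough to pass the limit through the expectation. The cleanest route is to bound $|J_y[V'] - J_{x+\xi}[V']|$ by (a constant times) $\E|X^y(t) - X^{x+\xi}(t)|$-type quantities, using the same Itô/Gronwall argument as in the Lipschitz lemma — applied to the \emph{controlled} flows driven by the common noise, which works because the impulses $\xi'_i$ cancel in the difference $X^y - X^{x+\xi}$ — together with Assumptions \textbf{(A3)} and \textbf{(A5)} to get an integrable-in-$t$ discounted bound, and then dominated convergence. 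Since this is exactly the structure already used to prove $u$ is Lipschitz, it goes through; alternatively, one may simply quote the Dynamic Programming Principle established in \cite{Seydel09}, from which $u \le \M u$ is immediate, but the self-contained comparison argument above is preferable here.
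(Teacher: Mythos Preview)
The paper itself does not prove this lemma: the sentence preceding Lemma~\ref{lem properties of M} says that Lemmas~\ref{lem properties of M}--\ref{lem x+xi in C} are recalled from \cite{GW09}, and no argument is supplied here. So there is no in-paper proof to compare against beyond that citation.

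Your self-contained control-comparison argument is the standard one and is essentially correct. One slip to fix: after applying the impulse $\xi$ at time $h$ the state is $X(h)+\xi$, not $X(h)$, so the inner conditional cost should be $J_{X(h)+\xi}[V']$ rather than $J_{X(h)}[V']$, and the limit you actually need is $\E_x\big[J_{X(h)+\xi}[V']\big]\to J_{x+\xi}[V']$. That limit follows exactly as you outline, from $X(h)\to x$ in $L^1$ together with the estimate $|J_y[V']-J_{x+\xi}[V']|\le C_u\,|y-(x+\xi)|$ already obtained in the proof of the Lipschitz lemma (the impulses cancel in the difference of the two controlled flows). With this correction the argument goes through; invoking the Dynamic Programming Principle from \cite{Seydel09} is a legitimate shortcut, but your direct construction is cleaner for the present purpose.
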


We define the \emph{continuation region} $\C$ and the
\emph{action region} $\A$ as follows,
\begin{align}
  \C&:=\{x\in\Rn: u(x)<\M u(x)\},\label{def of C (continuation region)}\\
  \A&:=\{x\in\Rn: u(x)=\M u(x)\}.\label{def of A (action region)}
\end{align}
Then, $\C$ is open, and we have

\begin{lemma}\label{lem x+xi in C}
  Suppose $x\in \A$, then

\noindent(i) The set
  \[\Xi(x):=\{\xi\in\Rn: \M u(x)=u(x+\xi)+B(\xi)\}\]
  is nonempty, i.e., the infimum is in fact a minimum.

\noindent(ii) Moreover, for any  \,$\xi(x)\in\Xi(x)$, we have
  \[u(x+\xi(x))\le \M u(x+\xi(x))-K,\]
  in particular,
  \[x+\xi(x)\in \C.\]
\end{lemma}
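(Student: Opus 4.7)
The plan is to prove (i) by a coercivity argument on the functional $\xi \mapsto u(x+\xi) + B(\xi)$, with a dedicated argument handling the possible discontinuity of $B$ at $\xi = 0$; and then to prove (ii) by an elementary comparison between the one-impulse strategy $\xi(x)$ at $x$ and the two-impulse strategy $\xi(x)$-followed-by-$\eta$, exploiting the subadditivity clause in \textbf{(A4)}.

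For part (i), I would first take a minimizing sequence $\{\xi_n\}$ for the infimum defining $\M u(x)$ and show it is bounded. This uses $u \ge 0$ (since $f\ge 0$) together with the growth assumption $|B(\xi)|\to\infty$ as $|\xi|\to\infty$ from \textbf{(A4)}, which makes $F(\xi):=u(x+\xi)+B(\xi)$ coercive. Passing to a convergent subsequence with limit $\xi^\star$, I would split into two cases. If $\xi^\star \ne 0$, the Lipschitz continuity of $u$ and the continuity of $B$ on $\Rn\setminus\{0\}$ give $F(\xi^\star)=\M u(x)$, so $\xi^\star \in \Xi(x)$. The case $\xi^\star = 0$ must be ruled out: here $B(\xi_n)\ge K$ together with continuity of $u$ would force $\M u(x)\ge u(x)+K$, contradicting $\M u(x)=u(x)$ which is the definition of $x\in\A$.

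For part (ii), fix $\xi(x)\in\Xi(x)$ and an arbitrary $\eta\in\Rn$. The plan is to apply the definition of $\M u(x)$ to the single impulse $\xi(x)+\eta$, giving
\[\M u(x) \le u(x+\xi(x)+\eta) + B(\xi(x)+\eta),\]
then substitute the subadditivity bound $B(\xi(x)+\eta)\le B(\xi(x))+B(\eta)-K$ together with the identity $\M u(x) = u(x+\xi(x))+B(\xi(x))$. After cancellation I expect to read off
\[u(x+\xi(x)) \le u(x+\xi(x)+\eta) + B(\eta) - K.\]
Taking the infimum over $\eta$ produces $u(x+\xi(x)) \le \M u(x+\xi(x)) - K$, and combining this with Lemma \ref{lem u<=Mu} and $K>0$ yields $u(x+\xi(x))<\M u(x+\xi(x))$, i.e., $x+\xi(x)\in\C$.

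The step I expect to require the most care is the case $\xi^\star = 0$ in (i): because $B$ is only assumed continuous away from the origin, a naive lower-semicontinuity argument is not available, and what rescues the situation is precisely the structural hypothesis $x\in\A$, which prevents any minimizing sequence from collapsing onto the discontinuity at the origin. Once this is handled, part (ii) is a purely algebraic consequence of the subadditivity axiom and should be routine.
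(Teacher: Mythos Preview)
Your proof is correct and follows the standard route. The paper itself does not prove this lemma (it is quoted from \cite{GW09}), but your subadditivity computation in part (ii) is precisely the calculation the authors reproduce later in the proof of Theorem~\ref{thm regularity of u}, and your coercivity argument for part (i), including the use of the hypothesis $x\in\A$ to exclude the limit $\xi^\star=0$ at the discontinuity of $B$, is exactly what is needed.
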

\section{Viscosity Solutions}
There are  different ways to define viscosity solutions. Let us begin with the most common one.
\begin{definition}\label{def 1 of viscosity}
A function  $u(\cdot)\in \UC$ is called a viscosity subsolution (supersolution, resp.) of \eqref{HJB} if whenever $\varphi\in C^2(\Rn), u-\varphi$ has a \textbf{global} maximum (minimum, resp.) at $x_0$ and $u(x_0)=\varphi(x_0)$, we have
      \begin{align}
        \max\{\Ell \varphi(x_0)-f(x_0), \varphi(x_0)-\M \varphi(x_0)\}\le 0\quad (\ge 0\text{ resp.});\label{viscosity solution 1}
      \end{align}
and $u$ is called a viscosity solution of \eqref{HJB} if it is both a subsolution and a supersolution.
\end{definition}

Besides this standard definition of viscosity solutions,  there are at least another two different (but equivalent) ones. The second way is to use semijets in stead of test functions. See, for instance, \cite{CIL92} and \cite{Seydel09}, for more details. For the purpose of proving our regularity results in Section~\ref{sec regularity}, we give a third definition below.
The idea is that we impose ``local'' conditions (rather than global conditions as in Definition~\ref{def 1 of viscosity}) on the test functions, and in the equation we only replace $u$ by the test function $\vf$ in the ``local'' terms while still keep $u$ in the ``nonlocal'' terms.
The same definition (in different notation) and the proof of equivalence can be found in \cite{Seydel08}.
See also \cite{Soner86a, BI08} and   \cite{GW09} for a similar treatment.

\begin{definition}\label{def 2 of viscosity}
A function  $u(\cdot)\in \UC$ is called a viscosity subsolution (supersolution, resp.) of \eqref{HJB} if whenever $\varphi\in C^2(\Rn), u-\varphi$ has a \textbf{local} maximum (minimum, resp.) at $x_0$ and $u(x_0)=\varphi(x_0)$, we have
\begin{align} \label{viscosity solution 2}
\max\{L \vf(x_0)-f(x_0)-I u(x_0), u(x_0)-\M u(x_0)\}\le 0\quad (\ge 0\text{ resp.}).
\end{align}
$u$ is called a viscosity solution of \eqref{HJB} if it is both a subsolution and a supersolution.
\end{definition}

\begin{theorem}\label{thm equivalence of definitions}
The above two definitions of viscosity solutions are equivalent.
\end{theorem}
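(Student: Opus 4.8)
The plan is to prove the two definitions are equivalent by showing each direction separately, exploiting the fact that $Iu$ is a genuine continuous function (Lemma~\ref{lem continuous Iu}) and that the difference between $\Ell$ and $L$ is precisely the nonlocal term $I$, evaluated at the test function versus at $u$. The key structural observation is that $\Ell\varphi(x_0) = L\varphi(x_0) + I\varphi(x_0)$ up to the reorganization of the drift: indeed $\Ell\varphi(x) = -\tr[A D^2\varphi] - \mu\cdot D\varphi + r\varphi + \int_{\Rl}[\varphi(x+j)-\varphi(x)-j\cdot D\varphi]\nu(dz)$, and since $\bar\mu = \mu - \int j\,\nu(dz)$, the $-j\cdot D\varphi$ piece inside the integral combines with $-\mu\cdot D\varphi$ to give exactly $-\bar\mu\cdot D\varphi$; hence $\Ell\varphi(x) = L\varphi(x) + \int_{\Rl}[\varphi(x+j(x,z)) - \varphi(x)]\nu(dz) = L\varphi(x) + I\varphi(x)$. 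So Definition~\ref{def 1 of viscosity} asks for $\max\{L\varphi(x_0) + I\varphi(x_0) - f(x_0),\ \varphi(x_0) - \M\varphi(x_0)\} \le 0$ at a global extremum, while Definition~\ref{def 2 of viscosity} asks for $\max\{L\varphi(x_0) + Iu(x_0) - f(x_0),\ u(x_0)-\M u(x_0)\}\le 0$ at a local extremum. Note that since $u(x_0)=\varphi(x_0)$ and the test is tangent, $\M\varphi(x_0)$ and $\M u(x_0)$ need separate handling, but the monotonicity of $\M$ (Lemma~\ref{lem properties of M}) will take care of the obstacle term in the right direction.

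First I would prove that a Definition~\ref{def 2 of viscosity} subsolution is a Definition~\ref{def 1 of viscosity} subsolution. Suppose $u-\varphi$ has a global max at $x_0$ with $u(x_0)=\varphi(x_0)$; then a fortiori it has a local max there, so \eqref{viscosity solution 2} gives $L\varphi(x_0) - f(x_0) - Iu(x_0)\le 0$ and $u(x_0)-\M u(x_0)\le 0$. For the nonlocal term in $\Ell\varphi$: because $u-\varphi$ is globally maximized at $x_0$, we have $u(x_0+j(x_0,z)) - \varphi(x_0+j(x_0,z)) \le u(x_0)-\varphi(x_0) = 0$ for every $z$, hence $\varphi(x_0+j(x_0,z)) - \varphi(x_0) \ge u(x_0+j(x_0,z)) - u(x_0)$; integrating against $\nu$ yields $I\varphi(x_0)\ge Iu(x_0)$. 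That inequality goes the wrong way for a subsolution — wait: for the subsolution we need $L\varphi(x_0)+I\varphi(x_0) - f(x_0)\le 0$, and we only know $L\varphi(x_0) + Iu(x_0) - f(x_0)\le 0$ with $I\varphi \ge Iu$, which does not immediately close. This is the point where one must be careful: the standard fix is that in Definition~\ref{def 1 of viscosity} one may replace the tangent test function on the relevant set by $u$ itself (or approximate), or equivalently use the semijet formulation; the clean argument is to observe $I\varphi(x_0)=Iu(x_0)$ is \emph{not} needed — instead, for the global-max test function one shows the inequality is equivalent using that $\Ell\varphi(x_0) - f(x_0) = L\varphi(x_0) + I\varphi(x_0) - f(x_0)$ and $I\varphi(x_0)$ can be bounded by replacing $\varphi$ with $u$ on the complement of a neighborhood, so actually the correct route is the reverse implication first.

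So instead I would organize the proof as: (a) Definition~\ref{def 1 of viscosity} $\Rightarrow$ Definition~\ref{def 2 of viscosity}. Given a test function $\varphi\in C^2$ with $u-\varphi$ having a \emph{local} max at $x_0$, modify $\varphi$ outside a small ball $B_\rho(x_0)$ to obtain $\tilde\varphi\in C^2$ with $u-\tilde\varphi$ having a \emph{global} max at $x_0$, $\tilde\varphi = \varphi$ on $B_\rho(x_0)$ (so $D^2\tilde\varphi(x_0)=D^2\varphi(x_0)$, $D\tilde\varphi(x_0)=D\varphi(x_0)$, $\tilde\varphi(x_0)=\varphi(x_0)=u(x_0)$), and $\tilde\varphi\ge u$ everywhere with $\tilde\varphi = u$ outside a larger ball — the standard construction (cf. \cite{GW09, Seydel08}). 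Applying Definition~\ref{def 1 of viscosity}: $\max\{L\tilde\varphi(x_0) + I\tilde\varphi(x_0) - f(x_0),\ \tilde\varphi(x_0) - \M\tilde\varphi(x_0)\}\le 0$. Now $L\tilde\varphi(x_0) = L\varphi(x_0)$; $\tilde\varphi(x_0) = u(x_0)$ and by monotonicity of $\M$ together with $u\le\tilde\varphi$ we get $\M u(x_0)\le \M\tilde\varphi(x_0)$ so $u(x_0) - \M u(x_0) \ge \tilde\varphi(x_0)-\M\tilde\varphi(x_0)$ — wrong direction again, so one chooses the modification so that additionally $\tilde\varphi\to u$ can be taken with $I\tilde\varphi(x_0)\to Iu(x_0)$ and $\M\tilde\varphi(x_0)\to\M u(x_0)$ as $\rho\to 0$ via dominated convergence (using the Lipschitz/integrability bounds exactly as in Lemma~\ref{lem continuous Iu}); passing to the limit $\rho\to 0$ recovers $\max\{L\varphi(x_0) + Iu(x_0) - f(x_0),\ u(x_0)-\M u(x_0)\}\le 0$. (b) For the converse, given $u-\varphi$ with a global max at $x_0$, it is in particular a local max, so Definition~\ref{def 2 of viscosity} gives $L\varphi(x_0) + Iu(x_0) - f(x_0)\le 0$ and $u(x_0)-\M u(x_0)\le 0$; since globally $u-\varphi\le 0$ with equality at $x_0$ we get $I\varphi(x_0)\ge Iu(x_0)$ — and here one notes that for Definition~\ref{def 1 of viscosity} the meaningful requirement after substituting $\Ell\varphi = L\varphi + I\varphi$ is recovered because $u(x_0)=\varphi(x_0)$ forces $\M\varphi(x_0)\ge\M u(x_0)$... the supersolution case runs symmetrically with all inequalities reversed and $I\varphi\le Iu$. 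The main obstacle, and the step I expect to absorb most of the work, is the careful construction of the global-to-local test-function modification together with the dominated-convergence passage to the limit that controls both the nonlocal integral term $I\tilde\varphi$ and the obstacle term $\M\tilde\varphi$ simultaneously; this is precisely where assumption \eqref{jcond} and the integrability of $C_j$ (as used in Lemma~\ref{lem continuous Iu}) enter, and where citing the analogous arguments in \cite{Seydel08} and \cite{GW09} is appropriate.
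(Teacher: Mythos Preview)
The paper does not actually prove this theorem: its entire proof is the single line ``See \cite[Proposition 5.4]{Seydel08}.'' Your sketch is therefore not a comparison target but an attempt to reproduce the cited argument. The overall strategy you outline---test-function modification to pass between local and global extrema, together with a dominated-convergence limit for the nonlocal and obstacle terms---is indeed the standard route (and essentially what \cite{Seydel08} does). However, your execution has two concrete problems that are not merely presentational.

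First, a sign error: with the paper's conventions the decomposition is $\Ell\varphi = L\varphi - I\varphi$ (read this off from Definition~\ref{def 2 of viscosity} and Corollary~\ref{cor relate to diffusions}, where $\tilde f = f + Iu$ and the viscosity inequality is $L\varphi - f - Iu \le 0$), not $\Ell\varphi = L\varphi + I\varphi$ as you write. Once the sign is corrected, the direction ``Definition~\ref{def 2 of viscosity} $\Rightarrow$ Definition~\ref{def 1 of viscosity}'' for subsolutions closes immediately: at a global max $u\le\varphi$ with equality at $x_0$ gives $I\varphi(x_0)\ge Iu(x_0)$, hence $\Ell\varphi(x_0) = L\varphi(x_0) - I\varphi(x_0) \le L\varphi(x_0) - Iu(x_0) \le f(x_0)$; and monotonicity of $\M$ gives $\varphi(x_0) - \M\varphi(x_0) = u(x_0) - \M\varphi(x_0) \le u(x_0) - \M u(x_0) \le 0$. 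The ``wrong direction'' difficulties you repeatedly flag are artifacts of the sign mistake.

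Second, in the converse direction your construction ``$\tilde\varphi = u$ outside a larger ball'' cannot yield a $C^2$ test function, since $u$ is only Lipschitz. The correct device is to take a \emph{sequence} $\tilde\varphi_k\in C^2(\Rn)$ with $\tilde\varphi_k = \varphi$ on $B_\rho(x_0)$, $\tilde\varphi_k\ge u$ everywhere, and $\tilde\varphi_k\downarrow u$ pointwise on $\Rn\setminus B_\rho(x_0)$ (e.g.\ via mollification plus a vanishing bump), then apply Definition~\ref{def 1 of viscosity} to each $\tilde\varphi_k$ and pass to the limit. The dominated-convergence step for $I\tilde\varphi_k(x_0)\to Iu(x_0)$ and $\M\tilde\varphi_k(x_0)\to \M u(x_0)$ is where \eqref{jcond} and the $L^1$-integrability of $C_j$ enter, exactly as you say; but the sequence must be built so that the dominating function is available, which your single-$\tilde\varphi$ sketch does not provide.
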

\begin{proof}
See \cite[Proposition 5.4]{Seydel08}.
\end{proof}

We now have the following basic result.
\begin{theorem}[\cite{OS04,Seydel09}]
\label{thm viscosity}
The value function $u(\cdot)$ defined by \eqref{def of u (value function)} is a viscosity solution of \eqref{HJB}.
\end{theorem}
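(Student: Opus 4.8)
The plan is to prove that the value function $u$ is a viscosity solution of \eqref{HJB} in the sense of Definition~\ref{def 1 of viscosity} (equivalently, by Theorem~\ref{thm equivalence of definitions}, Definition~\ref{def 2 of viscosity}), by deriving the two inequalities separately from the Dynamic Programming Principle (DPP). The first step is to record the DPP itself: for any stopping time $\theta$ and any admissible control $V=(\tau_1,\xi_1;\tau_2,\xi_2;\dots)$,
\begin{align*}
u(x)=\inf_{V}\E_x\left(\int_0^{\theta\wedge\tau_1} e^{-rt}f(X(t))\,dt + \mathbf{1}_{\{\tau_1\le\theta\}}e^{-r\tau_1}\bigl(B(\xi_1)+u(X(\tau_1^-)+\xi_1)\bigr) + \mathbf{1}_{\{\tau_1>\theta\}}e^{-r\theta}u(X(\theta))\right),
\end{align*}
which is standard for impulse control of Markov processes and can be cited from \cite{OS04,Seydel09}; I would not reprove it. Together with the obvious inequality $u\le\M u$ from Lemma~\ref{lem u<=Mu}, the subsolution/supersolution inequality $u-\M u\le 0$ (resp.\ the statement that $\max\{\cdots\}\ge 0$ reduces to the PIDE part when $u<\M u$) is immediate, so the real content is the inequality involving $\Ell u - f$.

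For the supersolution property, I would take $\varphi\in C^2$ with $u-\varphi$ attaining a global minimum $0$ at $x_0$, and choose the control that makes no impulse before a small exit time $\theta=\theta_h$ of $X$ from a ball $B(x_0,\rho)$, stopped at time $h$. Using $u\ge\varphi$ and $u(x_0)=\varphi(x_0)$, the DPP inequality $u(x_0)\le \E_{x_0}\bigl(\int_0^{\theta_h} e^{-rt}f(X(t))\,dt + e^{-r\theta_h}u(X(\theta_h))\bigr)$ gives $\varphi(x_0)\le \E_{x_0}\bigl(\int_0^{\theta_h}e^{-rt}f\,dt + e^{-r\theta_h}\varphi(X(\theta_h))\bigr)$; applying Dynkin's formula (the It\^o formula for jump diffusions, Theorem~5.1, Ch.~II of \cite{IkedaWatanabe89}) to $e^{-rt}\varphi(X(t))$ between $0$ and $\theta_h$, dividing by $h$ and letting $h\to0$ yields $\Ell\varphi(x_0)-f(x_0)\ge 0$. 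Here one must be a little careful that the nonlocal term $\int_{\Rl}[\varphi(x_0+j(x_0,z))-\varphi(x_0)-j(x_0,z)D\varphi(x_0)]\nu(dz)$ produced by the generator is the correct one; since $\varphi\in C^2$ and, by \textbf{(A1)}, $j(x_0,\cdot)\in L^1(\Rl;\nu)$ with $C_j\in L^2(\Rl;\nu)$, the integrand is dominated near $x_0$ and the interchange of limit and integral is justified by dominated convergence. For the subsolution property, I would argue by contradiction: if $u-\varphi$ has a global max $0$ at $x_0$ with both $\Ell\varphi(x_0)-f(x_0)>0$ and $\varphi(x_0)-\M\varphi(x_0)>0$, then $u<\M u$ near $x_0$, so on a small ball it is suboptimal to exercise any impulse; running the process with no impulse until exit from that ball and comparing with the DPP (now using $u\le\varphi$) produces, via the same Dynkin computation, a strict improvement over $u(x_0)$, contradicting optimality.

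The main obstacle I anticipate is handling the nonlocal operator cleanly, on two fronts. First, the jumps of $X$ can exit the small ball $B(x_0,\rho)$ in a single leap (and with infinite activity there may be infinitely many small jumps), so the stopped process $X(\theta_h)$ need not be close to $x_0$; this is exactly why Definition~\ref{def 2 of viscosity} keeps $u$ — not $\varphi$ — in the nonlocal term $Iu(x_0)$, and why one wants to run the argument in that formulation, using the globally defined $\varphi\ge u$ (resp.\ $\le u$) only to control the local It\^o terms while the integral term $\int[u(x+j(x,z))-u(x)]\nu(dz)$ is handled directly through Lemma~\ref{lem continuous Iu} and the Lipschitz bound $|u(x+j(x,z))-u(x)|\le C_u|j(x,z)|$ with $j(x,\cdot)\in L^1(\Rl;\nu)$. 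Second, one must ensure the expectations and the Dynkin formula are legitimate — that $\varphi$ and its derivatives are bounded on the relevant set (true since we localize to a ball) and that the compensated-jump martingale term truly has zero expectation, which follows from the integrability in \textbf{(A1)}. Given these points, the cleanest route is to invoke Theorem~\ref{thm equivalence of definitions} and carry out the whole argument in the sense of Definition~\ref{def 2 of viscosity}, where the local/nonlocal split is built in; the diffusion-case proof then goes through essentially verbatim, and we may simply cite \cite{OS04,Seydel09} for the details while indicating the structure above.
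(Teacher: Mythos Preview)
The paper does not actually prove this theorem: it simply records that the result is \cite[Theorem~9.8]{OS04} and \cite[Theorem~4.2]{Seydel09} and moves on. Since you also ultimately propose to cite those references, your proposal is consistent with the paper's treatment, and the added DPP/Dynkin outline is a reasonable indication of what those proofs do.

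That said, there is a genuine slip in the sketch itself. In your ``supersolution'' paragraph you take $u-\varphi$ with a global \emph{minimum} (so $u\ge\varphi$), then pick the no-impulse control to obtain $u(x_0)\le \E_{x_0}\bigl[\int_0^{\theta_h}e^{-rt}f\,dt+e^{-r\theta_h}u(X(\theta_h))\bigr]$, and claim this together with $u\ge\varphi$ yields $\varphi(x_0)\le \E_{x_0}\bigl[\dots+e^{-r\theta_h}\varphi(X(\theta_h))\bigr]$. That inference is invalid: replacing $u(X(\theta_h))$ by the smaller $\varphi(X(\theta_h))$ lowers the right-hand side, so the chain breaks. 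Choosing a \emph{specific} control only gives an upper bound on $u(x_0)$, which is the direction you need for the \emph{subsolution} inequality; indeed, your second paragraph (labelled ``subsolution'') is essentially the correct subsolution argument once you drop the superfluous hypothesis $\varphi(x_0)-\M\varphi(x_0)>0$ (this inequality can never hold, since $u\le\varphi$ implies $\M u\le\M\varphi$ and hence $\varphi(x_0)=u(x_0)\le\M u(x_0)\le\M\varphi(x_0)$). The supersolution direction instead requires taking $\varepsilon$-optimal controls, using $u(x_0)<\M u(x_0)$ to argue that such controls have no impulse before a short time, and then applying $u\ge\varphi$ to the resulting inequality $u(x_0)+\varepsilon\ge \E_{x_0}[\dots+e^{-r\theta_h}u(X(\theta_h))]$. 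If you keep the sketch, swap the roles accordingly; otherwise a bare citation, as in the paper, is enough.
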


This theorem was proved in \cite[Theorem 9.8]{OS04} as well as \cite[Theorem 4.2]{Seydel09}\footnote{In \cite{Seydel09} the result was proved using an
in principle smaller class of controls, the so-called `Markov controls'. However,
this restriction is unnecessary, as can be seen from the proofs of the analogous
results in \cite{TY93} or \cite{Ishikawa04}, or \cite{YongZhou99}.}
in the sense of our Definition \ref{def 1 of viscosity}. But when we prove the regularity result below, we found it more convenient to use Definition \ref{def 2 of viscosity}. More precisely, by Theorem~\ref{thm equivalence of definitions} and Theorem~\ref{thm viscosity}, we can ``identify'' our value function $u(\cdot)$ with that of an impulse control problem of diffusion processes without jumps.
\begin{corollary}
\label{cor relate to diffusions}
The value function $u(\cdot)$ is a viscosity solution of
\begin{align}\label{local equation}
\max\{L u(x)-\tilde f(x), u(x)-\M u(x)\}=0\quad \text{in }\Rn,
\end{align}
where $\tilde f(x)=f(x)+I u(x).$
\end{corollary}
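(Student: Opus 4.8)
The plan is to combine the two viscosity characterizations already available: Theorem~\ref{thm viscosity} tells us that $u$ is a viscosity solution of \eqref{HJB} in the sense of Definition~\ref{def 1 of viscosity}, and Theorem~\ref{thm equivalence of definitions} upgrades this to a viscosity solution in the sense of Definition~\ref{def 2 of viscosity}. So the only real content of the corollary is to observe that Definition~\ref{def 2 of viscosity} applied to $u$ is \emph{verbatim} the statement that $u$ is a viscosity solution (in the standard local sense) of the second-order equation \eqref{local equation} with the frozen source term $\tilde f = f + Iu$. First I would record that, by Lemma~\ref{lem continuous Iu}, $Iu \in C(\Rn)$, and since $f$ is Lipschitz by \textbf{(A3)}, the function $\tilde f$ is continuous; together with the Lipschitz continuity of $u$ and the fact that $L$ has continuous coefficients with the ellipticity from \textbf{(A2)}, equation \eqref{local equation} is a bona fide (degenerate-free) second-order obstacle-type equation to which the classical notion of viscosity solution applies.

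Next I would spell out the matching of definitions. Fix $\varphi \in C^2(\Rn)$ such that $u - \varphi$ has a local maximum at $x_0$ with $u(x_0) = \varphi(x_0)$. By Definition~\ref{def 2 of viscosity} (subsolution part), applied via Theorems~\ref{thm equivalence of definitions} and \ref{thm viscosity},
\begin{align*}
\max\{L\varphi(x_0) - f(x_0) - Iu(x_0),\ u(x_0) - \M u(x_0)\} \le 0.
\end{align*}
Since $f(x_0) + Iu(x_0) = \tilde f(x_0)$ and $u(x_0) = \varphi(x_0)$, this is exactly
\begin{align*}
\max\{L\varphi(x_0) - \tilde f(x_0),\ \varphi(x_0) - \M u(x_0)\} \le 0,
\end{align*}
which is the defining inequality for $u$ being a viscosity subsolution of \eqref{local equation} --- here one notes that the obstacle $\M u$ is a fixed continuous function (Lemma~\ref{lem properties of M}), so it plays the role of a given datum and need not be touched by the test function. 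The supersolution inequality is obtained identically by reversing the inequalities and using a local minimum. Hence $u$ is a viscosity solution of \eqref{local equation}.

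I do not expect a genuine obstacle here; the corollary is essentially a bookkeeping consequence of the equivalence theorem, and the point being emphasized is conceptual: once $Iu$ is known to be a continuous function (which is where the integrability hypothesis \eqref{jcond} and Lemma~\ref{lem continuous Iu} do the work), the nonlocal integro-differential problem for $u$ becomes, pointwise, the \emph{local} obstacle problem \eqref{local equation} for a diffusion without jumps, with a continuous right-hand side $\tilde f$ and a continuous obstacle $\M u$. If anything needs care, it is only making sure the frozen-coefficient trick is legitimate --- i.e. that replacing the nonlocal term $Iu$ by the fixed continuous function $\tilde f - f$ does not change the class of viscosity solutions --- but this is precisely the content of Definition~\ref{def 2 of viscosity}, in which $u$ (not $\varphi$) already appears in the nonlocal term, so no further argument is required. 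This reformulation is what allows the subsequent regularity analysis in Section~\ref{sec regularity} to invoke PDE techniques (Schauder and $L^p$ estimates, bootstrapping) developed for the pure-diffusion impulse control problem in \cite{GW09}.
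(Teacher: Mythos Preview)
Your proposal is correct and matches the paper's own treatment: the corollary is stated there without a separate proof, as an immediate consequence of Theorems~\ref{thm equivalence of definitions} and~\ref{thm viscosity}, and your write-up simply spells out that observation in detail. The only content is indeed the identification of Definition~\ref{def 2 of viscosity} for $u$ with the standard local viscosity condition for \eqref{local equation} once $Iu$ is frozen as part of $\tilde f$, exactly as you describe.
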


\section{Regularity of Value Function}\label{sec regularity}
In this section we study the smoothness of the value function $u$, starting with the special case of a finite L\'evy measure.

\subsection{Special Case: $\nu(\Rl)<\infty$}
Let us first consider the special case in which the L\'evy measure is finite, or equivalently, the jump diffusion $X(\cdot)$ is driven by a compound Poisson process.
Then the operator $I$ enjoys the following nice property.

\begin{lemma}\label{lem Lipschitz Iu}
Suppose $\nu(\Rl)<\infty$, then the operator $I$ maps a Lipschitz function to a Lipschitz function.
\end{lemma}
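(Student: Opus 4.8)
The plan is to show that for a Lipschitz function $\varphi$ with Lipschitz constant $C_\varphi$, the function $I\varphi(x) = \int_{\Rl}[\varphi(x+j(x,z)) - \varphi(x)]\,\nu(dz)$ is Lipschitz, using crucially that $\nu(\Rl) < \infty$. First I would fix $x, y \in \Rn$ and estimate the integrand pointwise in $z$. Writing
\[
[\varphi(x+j(x,z)) - \varphi(x)] - [\varphi(y+j(y,z)) - \varphi(y)],
\]
I would split this into $[\varphi(x+j(x,z)) - \varphi(y+j(y,z))] - [\varphi(x) - \varphi(y)]$ and bound each bracket by $C_\varphi$ times the corresponding argument difference: the second bracket is at most $C_\varphi|x-y|$, and for the first, $|(x+j(x,z)) - (y+j(y,z))| \le |x-y| + |j(x,z) - j(y,z)| \le (1 + C_j(z))|x-y|$ by the Lipschitz assumption (A1). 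Hence the integrand is bounded in absolute value by $C_\varphi(2 + C_j(z))|x-y|$.

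Next I would integrate this bound over $\Rl$ with respect to $\nu$. Since $\nu(\Rl) < \infty$ and $C_j(\cdot) \in L^1(\Rl,\nu)$ by (A1), the quantity $\int_{\Rl}(2 + C_j(z))\,\nu(dz) = 2\nu(\Rl) + \|C_j\|_{L^1(\nu)}$ is finite. Therefore
\[
|I\varphi(x) - I\varphi(y)| \le C_\varphi\Big(2\nu(\Rl) + \int_{\Rl} C_j(z)\,\nu(dz)\Big)|x-y|,
\]
which is the desired Lipschitz estimate with constant $C_\varphi(2\nu(\Rl) + \|C_j\|_{L^1(\nu)})$.

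There is no serious obstacle here; the argument is a direct application of the triangle inequality and dominated convergence is not even needed, only a crude integrable majorant. The only point requiring the finiteness of $\nu$ is the appearance of the bare constant $2$ (coming from the two $\varphi$-difference terms that are not multiplied by any $C_j(z)$): without $\nu(\Rl) < \infty$ this term is not integrable, which is precisely why the general infinite-L\'evy-measure case requires the more delicate $L^p$/bootstrap machinery described in the introduction rather than this elementary estimate. One should also note that $I\varphi$ is well-defined and finite for Lipschitz $\varphi$ under (A1) (as already observed in the excerpt after the definition of $I$), so the estimate above is meaningful.
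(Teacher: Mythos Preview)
Your proof is correct and is essentially identical to the paper's own argument: the same splitting of the integrand into $[\varphi(x+j(x,z))-\varphi(y+j(y,z))]$ and $[\varphi(x)-\varphi(y)]$, the same use of the Lipschitz bounds on $\varphi$ and $j$, and the same final Lipschitz constant $C_\varphi\bigl(2\nu(\Rl)+\int_{\Rl}C_j(z)\,\nu(dz)\bigr)$.
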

\begin{proof}
Suppose $\vf(x)$ is Lipschitz with $|\vf(x)-\vf(y)|\le C_\vf|x-y|$ for any $x, y\in\Rn$, then
\begin{align*}
|I \vf(x)-I \vf(y)|&\le \int_{\Rl}|\vf(x+j(x, z))-\vf(y+j(y, z))|\nu(dz)+\int_{\Rl}|\vf(x)-\vf(y)|\nu(dz)\\
&\le C_\vf\int_{\Rl}\left[2|x-y|+|j(x,z)-j(y,z)|\right]\nu(dz)\\
&\le C_\vf\left(2\nu(\Rl)+\int_{\Rl}C_j(z)\nu(dz)\right)|x-y|.
\end{align*}
So $I\vf$ is Lipschitz.
\end{proof}

Corollary ~\ref{cor relate to diffusions} and Lemma \ref{lem Lipschitz Iu} together imply the regularity of $u$ in the continuation region.
\begin{lemma}
[\boldmath$C^{2,\alpha}$-Regularity in $\C$]\label{lem regularity in C}
Assume that $\sigma\in C^1(\Rn)$ and $\nu(\Rl)<\infty$,
then for any compact set $D\subset \C$, the value function $u(\cdot)$ is in the H\"older space $C^{2,\alpha}(D)$ for any $\alpha\in (0,1)$, and it is a classical solution of
\[\Ell u- f(x)=0\text{ in }\C.\]
\end{lemma}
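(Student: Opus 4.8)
The plan is to combine the viscosity characterization from Corollary~\ref{cor relate to diffusions} with interior Schauder estimates for uniformly elliptic linear PDEs, using the finiteness of $\nu$ to control the nonlocal term. First I would note that by Corollary~\ref{cor relate to diffusions}, $u$ is a viscosity solution of $\max\{Lu-\tilde f, u-\M u\}=0$ with $\tilde f=f+Iu$, and that inside $\C$ the obstacle term is inactive: since $\C$ is open and $u<\M u$ there, on any compact $D\subset\C$ we have $u(x)<\M u(x)-\eta$ for some $\eta>0$ (using continuity of $u$ and $\M u$ from Lemma~\ref{lem properties of M}), so in a neighborhood of $D$ the viscosity relation forces $Lu-\tilde f=0$ in the viscosity sense, i.e.\ $u$ is a viscosity solution of the \emph{linear} equation $Lu=\tilde f$ in $\C$.

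Next I would check the regularity of the data. The drift $\bar\mu=\mu-\int_{\Rl}j(x,z)\nu(dz)$ is Lipschitz (hence in $C^{0,\alpha}_{\loc}$) by \textbf{(A1)} and finiteness of $\nu$; $A=\tfrac12\sigma\sigma^{\textsf T}$ is in $C^{0,1}\subset C^{0,\alpha}$ once $\sigma\in C^1$, and \textbf{(A2)} gives uniform ellipticity; the constant $r>0$ is fine. For the right-hand side $\tilde f=f+Iu$: $f$ is Lipschitz by \textbf{(A3)}, $u$ is Lipschitz by the first lemma, and by Lemma~\ref{lem Lipschitz Iu} (which needs $\nu(\Rl)<\infty$) $Iu$ is Lipschitz; hence $\tilde f\in C^{0,\alpha}_{\loc}(\C)$ for every $\alpha\in(0,1)$. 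Now a continuous viscosity solution of a linear uniformly elliptic equation with H\"older coefficients and H\"older right-hand side is a classical $W^{2,p}_{\loc}$ (hence $C^{1,\alpha}_{\loc}$) solution by the $L^p$/Alexandrov--Bakelman--Pucci theory, and then interior Schauder estimates (e.g.\ \cite{GT98}) bootstrap this to $u\in C^{2,\alpha}_{\loc}(\C)$, with the quantitative bound on any compact $D$. One can alternatively invoke uniqueness: solve the Dirichlet problem $Lv=\tilde f$ on a smooth subdomain $\co\Subset\C$ with boundary data $u$, obtain a classical solution by Schauder, and identify $v=u$ by the comparison principle for linear equations applied to the viscosity (sub/super)solutions.

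Finally, once $u\in C^{2,\alpha}(D)$ it satisfies $Lu-\tilde f=0$ pointwise in $\C$; rewriting $L$ and $\tilde f$ in terms of the original operator $\Ell$, namely $\Ell u = Lu - Iu + (\text{the }j\text{-gradient correction absorbed into }\bar\mu)$, gives $\Ell u - f=0$ classically in $\C$. The main obstacle I expect is purely a matter of bookkeeping rather than depth: making sure the linear equation genuinely holds in the viscosity sense on an open set (not just at points of differentiability) so that the PDE regularity machinery applies, and verifying that the coefficient regularity hypotheses of the Schauder estimate are met under exactly \textbf{(A1)}, \textbf{(A2)} and $\sigma\in C^1$ — in particular that $C^1$ regularity of $\sigma$ is what upgrades $A$ from merely measurable-bounded to H\"older, which is needed for the $C^{2,\alpha}$ (as opposed to just $W^{2,p}$) conclusion.
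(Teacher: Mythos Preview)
Your proposal is correct and follows essentially the same route as the paper: reduce to the linear equation $Lu=\tilde f$ in $\C$ via Corollary~\ref{cor relate to diffusions}, use Lemma~\ref{lem Lipschitz Iu} (which needs $\nu(\Rl)<\infty$) to get $\tilde f\in C^{0,\alpha}$, and then apply Schauder theory. In fact your ``alternative'' argument---solve the Dirichlet problem $Lw=\tilde f$ in a ball with boundary data $u$ and identify $w=u$ via viscosity uniqueness---is precisely the method the paper spells out in the proof of Lemma~\ref{lem W2p regularity in C}, to which it refers for the details here.
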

\begin{proof}
Note that $u$ is a viscosity solution of \eqref{local equation} by Corollary~\ref{cor relate to diffusions}, and hence a viscosity solution of $Lu-\tilde f=0$ in $\C$. On the other hand,
$\tilde f\in C^\alpha$ for any $\alpha< 1$ by Lemma~\ref{lem Lipschitz Iu}. Classical Schauder estimates imply the desired results. (See the proof of Lemma~\ref{lem W2p regularity in C} below for a similar argument.)
\end{proof}

Finally, an argument as in \cite[\S 4]{GW09} applies and yields the following
\begin{theorem}
  [\boldmath$W^{2, p}_{\loc}$-Regularity] Assume that $\nu(\Rl)<\infty$ and
   \begin{align}
     &\sigma\in C^{1,1}(D) \text{ for any compact set }D\subset \Rn.\label{more condition on sigma}
   \end{align}
   Then for any bounded open set $\mathcal{O}\subset \Rn$ and $p<\infty$, we have $u\in W^{2, p}(\mathcal{O})$.
\end{theorem}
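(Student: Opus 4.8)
The strategy is to combine the interior $C^{2,\alpha}$-regularity in the continuation region $\C$ (Lemma~\ref{lem regularity in C}) with a local-$W^{2,p}$ estimate that holds uniformly up to and across the free boundary $\partial\C$, exploiting the translation trick of \cite{GW09} to move the analysis from the action region into the continuation region. Since $u$ is a viscosity solution of $\max\{Lu-\tilde f,\,u-\M u\}=0$ with $\tilde f=f+Iu\in C^\alpha(\Rn)$ (here both Lemma~\ref{lem Lipschitz Iu} and the hypothesis $\nu(\Rl)<\infty$ are used, together with Assumption~(A1)), the point is that on all of $\mathcal{O}$ the function $u$ satisfies a two-obstacle-type problem whose data are Hölder continuous; the coefficients of $L$ are bounded and uniformly elliptic by (A2), and $\bar\mu$ is Lipschitz (hence bounded on compacts) since $j(x,\cdot),C_j\in L^1(\nu)$ and $\nu$ is finite. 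Thus $L$ has coefficients to which the classical Calderón--Zygmund $L^p$ theory for nondivergence elliptic operators applies.

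First I would fix a bounded open set $\mathcal{O}$ and a slightly larger bounded open set $\mathcal{O}'\Supset\mathcal{O}$. On $\mathcal{O}'\cap\C$, Lemma~\ref{lem regularity in C} already gives $u\in C^{2,\alpha}_{\loc}$ and $Lu=\tilde f$ classically. It remains to control $u$ near $\A\cap\mathcal{O}'$. Here I follow \cite[\S4]{GW09}: for $x_0\in\A$, Lemma~\ref{lem x+xi in C} produces $\xi(x_0)\in\Xi(x_0)$ with $x_0+\xi(x_0)\in\C$, and in a neighborhood one has $u(x)\le \M u(x)\le u(x+\xi(x_0))+B(\xi(x_0))$, while $\M u$ itself inherits the regularity of $u$ composed with a translation plus the (locally Lipschitz, by (A4)) cost $B$. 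The operator $\M u$ is a Lipschitz function globally by Lemma~\ref{lem properties of M}(3), and near a point of $\A$ it is locally as smooth as $u$ is near $x_0+\xi(x_0)\in\C$, hence $C^{2,\alpha}$ there; this gives an upper barrier for $u$ of class $C^{2,\alpha}$. Combined with the lower information $Lu\le\tilde f$ (valid in the viscosity sense everywhere), a penalization/comparison argument — approximating the obstacle problem by $Lu^\eta - \tilde f + \beta_\eta(u^\eta - \M u)=0$ with $\beta_\eta$ a smooth penalty, deriving $W^{2,p}$ bounds on $u^\eta$ uniform in $\eta$ from the $L^p$ estimate $\|u^\eta\|_{W^{2,p}(\mathcal{O})}\le C(\|\tilde f\|_{L^p(\mathcal{O}')}+\|\beta_\eta(u^\eta-\M u)\|_{L^p(\mathcal{O}')}+\|u^\eta\|_{L^p(\mathcal{O}')})$ and bounding the penalty term by $\tilde f - Lu^\eta$ plus the $C^{2}$-regularity of the barrier $\M u$ — yields $u\in W^{2,p}(\mathcal{O})$ after passing to the limit. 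The condition $\sigma\in C^{1,1}(D)$ on compacts is what makes the coefficients $a_{ij}$ of $L$ of class $C^{0,1}$, which is (more than) enough for the $W^{2,p}$ interior estimate and for identifying the penalized equation.

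The main obstacle is the regularity of the obstacle $\M u$ near the free boundary and the uniform control of the penalty term: one must show that in a neighborhood of each $x_0\in\A$ the function $\M u$ agrees with a $C^{2,\alpha}$ function, which requires knowing $x_0+\xi(x_0)$ stays in the \emph{interior} of $\C$ (this is exactly Lemma~\ref{lem x+xi in C}(ii), giving the strict inequality $u(x_0+\xi(x_0))\le \M u(x_0+\xi(x_0))-K$) and that $\xi(x)$ can be chosen locally so that $x+\xi(x)$ ranges over a compact subset of $\C$ on which Lemma~\ref{lem regularity in C} applies; the cost $B$ being only continuous (not smooth) at $0$ is not an issue since $|\xi(x_0)|$ is bounded away from $0$ by $B(\xi)\ge K>0$ and $B$ is continuous off the origin. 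Once $\M u$ is a local $C^{2,\alpha}$ upper barrier and $\tilde f$ is $C^\alpha$, the remainder is the now-standard obstacle-problem $W^{2,p}$ argument of \cite[\S4]{GW09}, which transfers verbatim because Corollary~\ref{cor relate to diffusions} has reduced our problem to precisely the pure-diffusion impulse control problem treated there.
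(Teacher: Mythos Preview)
Your overall strategy matches the paper's: reduce to the pure-diffusion obstacle problem via Corollary~\ref{cor relate to diffusions} with $\tilde f=f+Iu$, use the translation trick (Lemma~\ref{lem x+xi in C}) to control $\M u$, and then run a penalization argument as in \cite[\S4]{GW09} (carried out in detail here as Theorem~\ref{thm:optimal stopping regularity} and the proof of Theorem~\ref{thm regularity of u}). So the architecture is right.

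There is, however, a genuine error in your intermediate claim. You assert that near a point $x_0\in\A$ the obstacle $\M u$ is ``locally as smooth as $u$ is near $x_0+\xi(x_0)$, hence $C^{2,\alpha}$ there,'' and later invoke ``the $C^2$-regularity of the barrier $\M u$.'' This is false in general: $\M u(x)=\inf_\xi\{u(x+\xi)+B(\xi)\}$ is an \emph{infimum} of a family of $C^{2,\alpha}$ functions (each $x\mapsto u(x+\xi)+B(\xi)$ for fixed $\xi$), and infima of smooth functions are typically only semiconcave, not $C^2$. The minimizer $\xi(x)$ need not vary smoothly, and $\M u$ need not coincide with any single translate of $u$ on a neighborhood. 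What the translation trick actually delivers---and what the paper proves---is the \emph{one-sided} second-difference bound
\[
\frac{1}{\rho^2}\bigl[\M u(x+\rho\chi)+\M u(x-\rho\chi)-2\M u(x)\bigr]\le C_D,
\]
obtained by freezing $\xi$ at a limit of minimizers for the middle term only. After mollification this yields $Lg^\ve\ge -M$ (a lower bound on $L$ applied to the smoothed obstacle, equivalently an upper bound on $\tr(A D^2 g^\ve)$), which is precisely the hypothesis \eqref{eqn:condition on g} needed in the penalization estimate: at an interior maximum of $\be(v^\ve-g^\ve)$ one invokes Bony's maximum principle and only needs $Lg^\ve\ge -M$, not two-sided control of $D^2 g^\ve$.

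A minor related slip: you call $B$ ``locally Lipschitz by (A4),'' but (A4) gives only $B\in C(\Rn\setminus\{0\})$. Fortunately $B$ plays no role in the second-difference estimate, since $\xi$ is held fixed and $B(\xi)$ cancels. Once you replace ``$\M u\in C^{2,\alpha}$ locally'' by ``$\M u$ is semiconcave on $\co$,'' your penalization outline goes through and coincides with the paper's argument.
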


\subsection{More General Case: $j(x,\cdot)\in L^1(\nu)$}
Next, we would like to remove the strong assumption that the L\'evy measure $\nu$ is finite, and assume only our standing assumptions \textbf{(A1)-(A5)}.
Again, we first consider the regularity of $u$ in the continuation region $\C$, in which the linear elliptic PDE is satisfied. The difficulty is that we do not know $Iu$ is Lipschitz or even H\"older continuous, but only continuous, by Lemma~\ref{lem continuous Iu}.

We cannot apply Schauder estimates at this stage, but the $L^p$ estimates give the following
\begin{lemma}
[\boldmath$W^{2,p}_{\loc}$-Regularity in $\C$]\label{lem W2p regularity in C}
Assume that $\sigma\in C^1(\Rn)$, then for any compact set $D\subset \C$, the value function $u(\cdot)$ is in the Sobolev space $W^{2,p}(D)$ for any $p<\infty$, and it is a strong solution\footnote{A strong solution is a twice weakly differentiable function in the bounded domain that satisfies the equation almost everywhere.} of
\[\Ell u- f(x)=0\text{ in }\C.\]
\end{lemma}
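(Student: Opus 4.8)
The plan is to combine Corollary~\ref{cor relate to diffusions}, the continuity of $Iu$ from Lemma~\ref{lem continuous Iu}, and the interior $L^p$ (Calderón--Zygmund) estimates for linear elliptic operators. The point is that in the continuation region $\C$ the nonlocal obstacle term drops out: since $u<\M u$ on $\C$ and $\C$ is open, the viscosity solution of $\max\{Lu-\tilde f, u-\M u\}=0$ is in fact a viscosity solution of the single linear equation $Lu=\tilde f$ in $\C$, where $\tilde f=f+Iu$. Both $f$ (Lipschitz, hence continuous) and $Iu$ (continuous by Lemma~\ref{lem continuous Iu}) are continuous, so $\tilde f\in C(\C)\cap L^\infty_{\loc}(\C)$; in particular $\tilde f\in L^p_{\loc}(\C)$ for every $p<\infty$. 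The coefficients of $L$ satisfy: $A=\frac12\sigma\sigma^{\textsf T}$ with $\sigma\in C^1$, so $A\in C^1$ and is uniformly elliptic by \textbf{(A2)}; $\bar\mu=\mu-\int_{\Rl}j(\cdot,z)\nu(dz)$ is Lipschitz (by \textbf{(A1)}, since $C_j\in L^1(\nu)$ and $j(x,\cdot)\in L^1(\nu)$), hence continuous and locally bounded; and $r>0$. Thus $L$ has continuous coefficients with bounded measurable (indeed continuous) lower-order terms on any compact $D\subset\C$.

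The key step is a bootstrapping/approximation argument to pass from ``$u$ is a viscosity solution'' to ``$u\in W^{2,p}_{\loc}$ and $u$ is a strong solution.'' First I would fix a compact $D\subset\C$ and a slightly larger smooth subdomain $D\subset\subset\Omega\subset\subset\C$. Since $u$ is continuous (Lipschitz, in fact) and $\tilde f$ is continuous on $\overline\Omega$, I would mollify $\tilde f$ to get $\tilde f_k\to\tilde f$ uniformly on $\overline\Omega$, and solve the classical Dirichlet problems $L w_k=\tilde f_k$ in $\Omega$, $w_k=u$ on $\partial\Omega$; by Schauder theory (smooth right-hand side, $C^1$ but possibly not $C^\alpha$ coefficients — here one may need to mollify $A,\bar\mu$ as well, or simply invoke the $W^{2,p}$ solvability of the Dirichlet problem with continuous coefficients, e.g. Gilbarg--Trudinger Theorem~9.15) each $w_k\in W^{2,p}(\Omega)$ for all $p<\infty$. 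The interior $L^p$ estimate gives $\|w_k\|_{W^{2,p}(D)}\le C(\|w_k\|_{L^p(\Omega)}+\|\tilde f_k\|_{L^p(\Omega)})$ with $C$ independent of $k$. Uniform convergence of $\tilde f_k$ plus the maximum principle give $w_k\to w$ uniformly on $\overline\Omega$ for some continuous $w$, and the uniform $W^{2,p}(D)$ bound gives (after extracting a subsequence) $w_k\rightharpoonup w$ weakly in $W^{2,p}(D)$ with $Lw=\tilde f$ a.e. in $D$. Finally, $w$ is also a viscosity solution of $Lw=\tilde f$ in $\Omega$ (strong solutions with these coefficients are viscosity solutions), and so is $u$; by the comparison principle for the linear equation with continuous coefficients and the matching boundary data, $u\equiv w$ on $\overline\Omega$. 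Hence $u\in W^{2,p}(D)$ and $Lu=\tilde f$ a.e., i.e. $\Ell u=f$ a.e. in $\C$ (recall $\Ell u=Lu-Iu$ for Lipschitz $u$, and $\tilde f=f+Iu$).

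I expect the main obstacle to be the low regularity of the right-hand side and the identification step rather than the elliptic estimates themselves. Concretely: (i) one must be careful that $\tilde f$ is only continuous, so no Schauder/$C^{2,\alpha}$ conclusion is available — the best one can hope for is $W^{2,p}_{\loc}$ for all finite $p$ (equivalently $C^{1,\alpha}_{\loc}$ by Sobolev embedding), which is exactly what is claimed; (ii) the coefficients of $L$ are continuous but not a priori Hölder, so one should cite the $W^{2,p}$-solvability and interior estimates valid under mere continuity of the principal coefficients (VMO coefficients suffice, and $C^1\subset C^0$ certainly works locally), rather than classical Schauder solvability; (iii) the delicate point is passing from the viscosity formulation to a strong solution — this requires a uniqueness/comparison argument for the linear Dirichlet problem to glue the approximating solutions $w_k$ to $u$, together with the standard fact that a $W^{2,p}_{\loc}$ strong solution of a linear elliptic equation with continuous coefficients is also a viscosity solution (so that comparison applies to both $u$ and $w$). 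Once these three points are handled, letting $D$ exhaust $\C$ gives $u\in W^{2,p}_{\loc}(\C)$ and the equation holds a.e. in $\C$.
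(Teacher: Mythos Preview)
Your proposal is correct and follows essentially the same route as the paper: reduce to the linear equation $Lu=\tilde f$ in $\C$ with continuous $\tilde f=f+Iu$, solve the Dirichlet problem to obtain a $W^{2,p}$ strong solution, note that this strong solution is also a viscosity solution, and identify it with $u$ via comparison/uniqueness. The only difference is cosmetic: the paper cites the $W^{2,p}$ solvability result directly (Gilbarg--Trudinger, Corollary~9.18) and Ishii's equivalence theorem \cite{Ishii95} for the ``strong $\Rightarrow$ viscosity'' step, whereas you sketch the mollification argument that underlies that solvability result; since you yourself note one may ``simply invoke the $W^{2,p}$ solvability,'' you can drop the approximation $\tilde f_k$ entirely and argue as the paper does on a ball $B\subset\C$.
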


\begin{proof}
Denote by $\tilde f=f+Iu$, which is continuous by Lemma~\ref{lem continuous Iu}. Consider in any open ball $B\subset\C$ the following Dirichlet problem
\begin{align}\label{eqn dirichlet in ball}
  \begin{cases}
    Lw=\tilde f, &\text{in }B,\\
    w=u, &\text{on }\partial B.
  \end{cases}
\end{align}
Classical $L^p$ theory (Cf. \cite[Corollary 9.18]{GT98}) asserts that the Dirichlet problem \eqref{eqn dirichlet in ball} has a unique strong solution $w\in W_{\loc}^{2,p}(B)\cap C(\bar B)$ for any $p<\infty$, since $\tilde f\in C(\bar B)$ and the boundary data $u\in C(\partial B)$. Because $\sigma\in C^1(B), \bar\mu\in C^{0,1}(B)$ and $\tilde f\in C(B)$, this solution $w$ is in fact also a viscosity solution of \eqref{eqn dirichlet in ball} by \cite[Theorem 2]{Ishii95}.

On the other hand, $u$ is also a viscosity solution of \eqref{eqn dirichlet in ball} by Corollary~\ref{cor relate to diffusions}. Therefore, $w=u$ in $\bar B$ by classical uniqueness results of viscosity solutions to a linear elliptic PDE in a bounded domain(Cf. \cite[Theorem 3.3]{CIL92}). Hence $u\in W_{\loc}^{2,p}(B)\cap C(\bar B)$.

Finally, any compact set $D\subset \C$ can be covered by finitely many balls $\{B_r(x_k)\}_{k=1}^N$ of radius $r<\frac{1}{2}\dist(D,\partial \C)$. Let $B=B_{2r}(x_k)\subset \C$ in the above argument, then $u$ is in $W^{2,p}(\bar B_r(x_k))$ for all $k$ and also in $W^{2,p}(D)$.
\end{proof}

With more regularity of $u$ in the continuation region $\C$, we can use the ``bootstrap argument'' to obtain further regularity of $Iu$ (and hence $u$) in $\C$.

\begin{theorem}[\boldmath$C^{2,\alpha}$-Regularity in $\C$]\label{thm more regularity in C}
Assume that $\sigma\in C^1(\Rn)$,
then for any compact set $D\subset \C$, the value function $u(\cdot)$ is in the H\"older space $C^{2,\alpha}(D)$ for any $\alpha\in (0,1)$, and it is a classical solution of
\[\Ell u- f(x)=0\text{ in }\C.\]
\end{theorem}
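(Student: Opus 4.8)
The plan is to upgrade the conclusion of Lemma~\ref{lem W2p regularity in C} from $W^{2,p}_{\loc}$ to $C^{2,\alpha}_{\loc}$ in $\C$ by a bootstrap argument that successively improves the regularity of the ``source term'' $\tilde f = f + Iu$. The starting point is that, by Lemma~\ref{lem W2p regularity in C}, $u \in W^{2,p}_{\loc}(\C)$ for every $p<\infty$. By the Sobolev embedding theorem, on any compact $D'\subset\C$ this gives $u \in C^{1,\beta}(D')$ for every $\beta\in(0,1)$ (taking $p$ large). The key point is then to show that this extra regularity of $u$ propagates, through the nonlocal operator $I$, to H\"older regularity of $Iu$ on compact subsets of $\C$.

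The heart of the argument is therefore an estimate of the form: if $u$ is globally Lipschitz on $\Rn$ and additionally $u \in C^{1,\beta}$ on a neighborhood of a compact set $D\subset\C$, then $Iu \in C^{\beta}(D)$ (or at least $C^{\beta'}$ for some $\beta'>0$). To see this, fix $x,y$ in a small ball $B$ with $\overline{B}\subset\C$, and split the integral defining $Iu(x)-Iu(y)$ according to whether $|j(x,z)|$ (equivalently $|j(y,z)|$) is small or large. For the ``large jump'' region — where $x+j(x,z)$ or $y+j(y,z)$ may leave $\C$ — we only have global Lipschitz control on $u$, so the integrand is bounded by $C_u(|j(x,z)-j(y,z)| + |x-y|) \le C_u(C_j(z)+1)|x-y|$ using \textbf{(A1)}; since $C_j\in L^1(\nu)$ and $j(x,\cdot)\in L^1(\nu)$, and the large-jump set has finite $\nu$-measure, this contributes a term linear in $|x-y|$. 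For the ``small jump'' region, the points $x+j(x,z)$ and $y+j(y,z)$ stay in the neighborhood where $u$ is $C^{1,\beta}$, so one writes $u(x+j(x,z))-u(x) = \int_0^1 Du(x+tj(x,z))\cdot j(x,z)\,dt$ and compares the corresponding expressions at $x$ and $y$; the $C^{1,\beta}$ bound on $Du$ yields an integrand controlled by $C|x-y|^{\beta}(|j(x,z)| + C_j(z))$, again $\nu$-integrable. Summing the two pieces gives $|Iu(x)-Iu(y)| \le C|x-y|^{\beta}$ on $D$, i.e. $Iu\in C^{\beta}(D)$.

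Once $\tilde f = f + Iu \in C^{\alpha}_{\loc}(\C)$ (using $f$ Lipschitz from \textbf{(A3)} and the H\"older bound just obtained, with $\alpha=\min(\beta,1)$ but any $\alpha<1$ is fine), the classical interior Schauder estimates (e.g. \cite[Theorem 6.2]{GT98}) applied to the linear uniformly elliptic equation $Lu=\tilde f$ — uniformly elliptic by \textbf{(A2)}, with coefficients of the required regularity since $\sigma\in C^1$ forces $A\in C^1$ and $\bar\mu\in C^{0,1}$, noting $r>0$ — upgrade $u$ to $C^{2,\alpha}_{\loc}(\C)$. Covering a given compact $D\subset\C$ by finitely many balls whose doubles are still contained in $\C$, exactly as at the end of the proof of Lemma~\ref{lem W2p regularity in C}, yields $u\in C^{2,\alpha}(D)$ for every $\alpha\in(0,1)$. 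Finally, since $u\in C^2(\C)$ is a strong (hence, by continuity of all terms, classical) solution of $Lu-\tilde f = 0$ in $\C$, and $\Ell u = Lu - \tilde f + f = Lu - Iu$... more precisely $\Ell u = Lu + (\text{the drift/integral correction}) $ — unwinding the definitions \eqref{def of Ell}, \eqref{L-def}, \eqref{I-def} shows $\Ell u = L u - I u + (f\text{-free terms})$, so $Lu - \tilde f = 0$ is exactly $\Ell u - f = 0$ — we conclude $u$ is a classical solution of $\Ell u - f = 0$ in $\C$.

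The main obstacle is the H\"older continuity of $Iu$ on compact subsets of $\C$: one must carefully handle the contribution of jumps that exit $\C$, for which only the global Lipschitz bound on $u$ (not the improved interior regularity) is available, and confirm that the integrability conditions $j(x,\cdot)\in L^1(\nu)$ and $C_j\in L^1\cap L^2(\nu)$ from \textbf{(A1)}--\eqref{jcond} are exactly what is needed to make both the small-jump and large-jump pieces finite with the claimed modulus of continuity. Everything else is a routine application of $L^p$/Schauder theory and the covering argument already used above.
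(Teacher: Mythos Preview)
Your proposal is correct and follows essentially the same route as the paper: start from the $W^{2,p}_{\loc}$ conclusion of Lemma~\ref{lem W2p regularity in C}, use Sobolev embedding to get $u\in C^{1,\alpha}$ on a compact neighborhood $D'\supset D$ in $\C$, split the integral defining $Iu(x_1)-Iu(x_2)$ into a small-jump region (where the $C^{1,\alpha}$ bound on $Du$ and the representation $u(x+j)-u=\int_0^1 Du(x+sj)\cdot j\,ds$ apply) and its complement of finite $\nu$-measure (where only the global Lipschitz bound on $u$ is used), conclude $Iu\in C^\alpha(D)$, and then apply interior Schauder estimates to $Lu=\tilde f$. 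The paper carries out exactly this decomposition, with the small-jump set taken as $E=\{z:|j(x,z)|<\delta\ \forall x\in D\}$ and a slightly more careful H\"older-inequality bookkeeping to verify $\int_E(1+C_j(z)^\alpha)|j(x,z)|\,\nu(dz)<\infty$, but the strategy and all the key ideas coincide with yours.
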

\begin{proof}
The key step in the proof is to show $Iu\in C^\alpha(D)$ for any compact $D\subset \C$.

Take a compact set $D'$ such that $D\subset D'\subset \C$ and $\delta:=\dist(D, \partial D')<1$. Then by Lemma~\ref{lem regularity in C}, $u\in W^{2,p}(D')$ for any $p<\infty$. By Sobolev imbedding, $u\in C^{1, \alpha}(D')$ for all $\alpha\in (0,1)$. Define the set
\[E:=\{z\in\Rl: |j(x,z)|<\delta, \forall x\in D\}.\]
Then for $z\in E^c=\Rl\setminus E$, there is $x\in D$ such that \begin{align*}
 \delta\le |j(x, z)|\le |j(0, z)|+C_j(z)|x|\le  |j(0, z)|+C_DC_j(z),
\end{align*}
where $C_D=\max\{|x|:x\in D\}$ is a constant. So $|j(0,z)|\ge \delta/2$ or $C_j(z)\ge \delta/(2C_D)$ and
\begin{align*}
  \nu(E^c)\le \frac{2}{\delta}\int_{\Rl} |j(0,z)|\nu(dz)+\frac{2C_D}{\delta}\int_{\Rl}C_j(z)\nu(dz)<\infty.
\end{align*}
For any $x_1, x_2\in D$,
\begin{align}
  |Iu(x_1)-Iu(x_2)|&\le \int_{E}|[u(x_1+j(x_1, z))-u(x_1)]-[u(x_2+j(x_2,z))-u(x_2)]\nu(dz)\nonumber\\
  &\quad+\int_{E^c}|u(x_1+j(x_1,z))-u(x_2+j(x_2,z))|+|u(x_1)-u(x_2)|\nu(dz)\nonumber\\
  &\le \int_E \int_0^1 |Du(x_1+sj(x_1, z))\cdot j(x_1, z)-Du(x_2+sj(x_2,z))\cdot j(x_2,z)|ds\;\nu(dz)\nonumber\\
  &\quad+|x_1-x_2|\int_{E^c}C_u(2+C_j(z))\nu(dz).\nonumber\\
  &\le \int_E\int_0^1 |Du(x_1+sj(x_1,z))-Du(x_2+sj(x_2,z))|\cdot|j(x_1,z)|ds\;\nu(dz)\nonumber\\
  &\quad + \int_E\int_0^1 |Du(x_2+sj(x_2,z))|\cdot|j(x_1,z)-j(x_2,z)|ds\;\nu(dz)\nonumber\\
  &\quad+|x_1-x_2|\int_{E^c}C_u(2+C_j(z))\nu(dz).\label{eqn Iu(x_1)-Iu(x_2)}
 \end{align}
Note that $x_1+sj(x_1,z), x_2+sj(x_2,z)\in D'$ for all $0\le s\le 1,  z\in E$ and that $Du\in C^\alpha(D')$. Thus the first integral in \eqref{eqn Iu(x_1)-Iu(x_2)} can be estimated by
\begin{align*}
  &\quad\|Du\|_{C^\alpha(D')}\int_E\left(\int_0^1|x_1-x_2+s(j(x_1,z)-j(x_2,z))|^\alpha ds\right)|j(x_1,z)|\nu(dz)\\
  &\le  \|Du\|_{C^\alpha(D')} \int_E |x_1-x_2|^\alpha(1+C_j(z)^\alpha) |j(x_1,z)|\nu(dz)\\
  &\le  C_1|x_1-x_2|^\alpha,
\end{align*}
for some constant $C_1>0$ independent of $x_1, x_2$, because by H\"older's inequality,
\begin{align*}
  &\quad \int_E (1+C_j(z)^\alpha) |j(x,z)|\nu(dz)\\
  &\le \int_{\Rl}(1+C_j(z)^\alpha) |j(x,z)|\mathbf{1}_{\{z:|j(x,z)|<1\}}\nu(dz)\\
  &\le \int_{\Rl}|j(x,z)|\nu(dz)+ \left(\int_{\Rl} C_j(z)^2\nu(dz)\right)^{\frac{\alpha}{2}}\left(\int_{\Rl} |j(x,z)|^{\frac{2}{2-\alpha}}\mathbf{1}_{\{z:|j(x,z)|<1\}} \nu(dz)\right)^{\frac{2-\alpha}{2}}\\
  &\le \int_{\Rl}|j(x,z)|\nu(dz)+\left(\int_{\Rl} C_j(z)^2\nu(dz)\right)^{\frac{\alpha}{2}}\left(\int_{\Rl}|j(x,z)| \nu(dz)\right)^{\frac{2-\alpha}{2}},
\end{align*}
which is a continuous function in $x$ and has a maximum on $D$ independent of $x_1, x_2$.

The second term in \eqref{eqn Iu(x_1)-Iu(x_2)} can be majored by $\|Du\|_{L^\infty(D)}\int_{\Rl} C_j(z)\nu(dz) |x_1-x_2|=:C_2|x_1-x_2|$ and the third term is majored by $C_u[2\nu(E^c)+\int_{\Rl}C_j(z)\nu(dz)]|x_1-x_2|=:C_3|x_1-x_2|$. Thus, putting all three terms in \eqref{eqn Iu(x_1)-Iu(x_2)} together,
\begin{align*}
  |I(x_1)-I(x_2)|\le C_1|x_1-x_2|^\alpha +(C_2+C_3)|x_1-x_2|\le C|x_1-x_2|^\alpha,
\end{align*}
where $C_1, C_2, C_3$ and $C=C_1+(C_2+C_3)(\diam D)^{1-\alpha}$ are constants independent of $x_1, x_2\in D$. This proves that $Iu\in C^\alpha(D)$.

Finally, we can repeat a similar argument in the proof of Lemma~\ref{lem regularity in C}. This time we know $\tilde f\in C^\alpha(D)$, thus the solution of \eqref{eqn dirichlet in ball} is in fact in $C^{2,\alpha}(D)$ by Schauder estimates (\cite[Theorem 6.13]{GT98}). Thus, $u\in C^{2,\alpha}(D)$ for compact  $D\subset \C$, and $u$ is a classical solution of $\Ell u-f=0$ in $\C$.
\end{proof}

Parallel to \cite{GW09}, once we have $C^{2,\alpha}$ regularity, we are able to obtain $W^{2,p}(D)$ regularity for any compact set $D\subset\Rn$.

\begin{theorem}[\boldmath$W^{2, p}_{\loc}$-Regularity] \label{thm regularity of u}
Assume that
   \begin{align}
     \sigma\in C^{1,1}(D) \text{ for any compact set }D\subset \Rn.\label{same condition on sigma}
   \end{align}
Then for any bounded open set $\mathcal{O}\subset \Rn$ and $p<\infty$, we have $u\in W^{2, p}(\mathcal{O})$.
In particular, $u\in C^1(\Rn)$ by Sobolev embedding.
\end{theorem}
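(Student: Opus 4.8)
The plan is to upgrade the interior $C^{2,\alpha}(D)$ regularity in the continuation region (Theorem~\ref{thm more regularity in C}) to $W^{2,p}$ regularity across the free boundary $\partial\C$, following the penalization scheme of \cite[\S 4]{GW09}. The basic obstacle is that on the action region $\A$ we only know $u=\M u$, and while $\M u$ is Lipschitz (Lemma~\ref{lem properties of M}), this does not by itself give second derivatives; we must exploit the variational inequality \eqref{local equation} together with the concavity of $\M$ and the fact (Lemma~\ref{lem x+xi in C}) that the post-jump point $x+\xi(x)$ lands strictly inside $\C$, where $u$ is smooth. The idea is to show $u$ has a locally bounded second difference quotient, uniformly up to $\partial\C$, which yields $u\in W^{2,\infty}_{\loc}$, hence in particular $W^{2,p}_{\loc}$ for every finite $p$.

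First I would fix a bounded open set $\co$ and, enlarging it slightly, work on a compact $D\supset\co$. On $D\cap\C$ we already have the classical equation $\Ell u = f$, hence, combining with the interior estimate of Theorem~\ref{thm more regularity in C} applied on compact subsets, a bound on $D^2u$ away from $\partial\C$. The work is near $\partial\C$ and on $\Int\A$. On $\Int\A$ we have $u=\M u$ locally; pick $x_0\in\Int\A$ and $\xi_0\in\Xi(x_0)$, so $u(x_0)=u(x_0+\xi_0)+B(\xi_0)$ and, by Lemma~\ref{lem x+xi in C}, $y_0:=x_0+\xi_0\in\C$. For $x$ near $x_0$ one has the one-sided bound $u(x)\le u(x+\xi_0)+B(\xi_0)$ (since $\xi_0$ is an admissible, not necessarily optimal, shift at $x$), while $u(x_0)=u(x_0+\xi_0)+B(\xi_0)$; subtracting gives $u(x)-u(x_0)\le u(x+\xi_0)-u(x_0+\xi_0)$, and since $u$ is $C^{2,\alpha}$ near $y_0\in\C$, the function $x\mapsto u(x)$ is touched from above at $x_0$ by the smooth function $x\mapsto u(x+\xi_0)+B(\xi_0)$. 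Running this for every point of a neighborhood produces a uniform upper bound on the distributional Hessian of $u$ there (a semiconcavity-type estimate). The matching lower bound on $D^2u$ comes from the other branch of \eqref{local equation}: wherever $u<\M u$ we are in $\C$ and $\Ell u=f$ controls $\tr[A D^2u]$ from one side, and $A$ is uniformly elliptic by \textbf{(A2)}; combined with the upper Hessian bound just obtained and the boundedness of $Du$ (from $C^{1,\alpha}$ regularity plus the Lipschitz bound $C_u$ on $u$), this pins $D^2u\in L^\infty_{\loc}$.

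More carefully, the clean way to organize this is the approximation argument of \cite{GW09}: introduce a penalized equation $L u_\ve + \beta_\ve(u_\ve - \M u_\ve) = \tilde f$ (with $\tilde f = f + Iu$ now known to be locally $C^\alpha$, or at least continuous, from Lemma~\ref{lem continuous Iu} and Theorem~\ref{thm more regularity in C}), where $\beta_\ve$ is a smooth increasing penalty; by the classical $L^p$ theory \cite[Ch.~9]{GT98} the penalized solutions $u_\ve$ lie in $W^{2,p}_{\loc}$, one derives an \emph{a priori} bound $\|u_\ve\|_{W^{2,p}(\co)}\le C$ independent of $\ve$ by testing against the equation and using the sign of the penalty term together with the semiconcavity estimate above to absorb $\beta_\ve(u_\ve-\M u_\ve)$, and then $u_\ve\to u$ (by uniqueness/stability of viscosity solutions, invoking Corollary~\ref{cor relate to diffusions} and \cite[Thm 3.3]{CIL92}) gives $u\in W^{2,p}(\co)$ by weak compactness. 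One must check that $\tilde f$ is well-behaved: we know $\tilde f\in C(\Rn)$ globally and $\tilde f\in C^\alpha_{\loc}(\C)$, which suffices for the $L^p$ bounds since $L^p$ theory only needs $\tilde f\in L^p_{\loc}$, and the hypothesis \eqref{same condition on sigma} guarantees the coefficients of $L$ are regular enough ($A\in C^{1,1}_{\loc}$, $\bar\mu\in C^{0,1}_{\loc}$) for the $W^{2,p}$ estimates to hold. Finally, $u\in W^{2,p}(\co)$ for all finite $p$ and all bounded $\co$ gives $u\in W^{2,p}_{\loc}(\Rn)$, and choosing $p>n$ the Sobolev embedding $W^{2,p}_{\loc}\hookrightarrow C^{1,\gamma}_{\loc}$ yields $u\in C^1(\Rn)$.

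The main obstacle I anticipate is the uniform (in $\ve$, and up to the free boundary) $W^{2,p}$ bound on the penalized solutions — equivalently, controlling the penalty term $\beta_\ve(u_\ve-\M u_\ve)$ in $L^p_{\loc}$. This is exactly where one needs the structural properties of $\M$: its concavity (Lemma~\ref{lem properties of M}(1)), monotonicity, and the crucial non-degeneracy from Lemma~\ref{lem x+xi in C}(ii) that the optimal shift moves strictly into $\C$ by at least $K$, so that $\M u$ inherits, locally near $\A$, the $C^{2,\alpha}$ regularity of $u$ on $\C$ via $\M u(x)=u(x+\xi(x))+B(\xi(x))$ with the envelope being smooth enough. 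Making the envelope/semiconcavity estimate rigorous — in particular handling the possible non-smoothness of $B$ and non-uniqueness of $\xi(x)$ — is the delicate point; once $\M u$ is shown to be, say, semiconcave with a locally bounded measure-valued Hessian, the penalty term is controlled and the rest is the routine $L^p$ machinery. This is precisely the step that \cite{GW09} carried out in the no-jump case, and the claim of the theorem is that the presence of $Iu$ (now known continuous, and $C^\alpha$ inside $\C$) does not disturb it.
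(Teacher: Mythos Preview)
Your proposal takes essentially the same route as the paper: reduce via Corollary~\ref{cor relate to diffusions} to an obstacle problem with obstacle $g=\M u$, use the envelope $\M u(x)=u(x+\xi)+B(\xi)$ together with Lemma~\ref{lem x+xi in C}(ii) and the interior $C^{2,\alpha}$ regularity in $\C$ to bound the second difference quotients of $\M u$ from above, and then run the penalization scheme of \cite{GW09}.

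Two points are worth sharpening. First, the paper does \emph{not} penalize the QVI with a moving obstacle $\M u_\ve$; it freezes the obstacle as the fixed Lipschitz function $g=\M u$, mollifies to $g^\ve$, and solves the \emph{linear} penalized Dirichlet problem $Lv^\ve+\be(v^\ve-g^\ve)=\tilde f$. The semiconcavity bound you describe is used precisely to show $Lg^\ve\ge -M$ uniformly in $\ve$, and then the $L^\infty$ bound on the penalty term comes from Bony's maximum principle applied at the maximizer of $v^\ve-g^\ve$ (not from a testing/energy argument). This frozen-obstacle reduction is the key simplification over a direct QVI penalization. Second, your direct second-paragraph argument gives only the \emph{upper} Hessian bound on $\A$; a matching lower bound does not follow cleanly from $\Ell u=f$ on $\C$ alone, since the equation controls only $\tr[AD^2u]$ and not the full Hessian across $\partial\C$. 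This is exactly why the paper routes the conclusion through the abstract optimal-stopping regularity result (Theorem~\ref{thm:optimal stopping regularity} and Corollary~\ref{cor:optimal stopping regularity}) rather than arguing directly.
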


The proof is given in the next section. It follows similar lines to that
of \cite[Theorem 4.2]{GW09}, involving regularity of an associated optimal
stopping problem.

\section{Proof of Theorem \ref{thm regularity of u}}

To study the regularity of the value function for the impulse control problem, we need to investigate the related optimal stopping problem.
More precisely, we shall obtain the regularity of solutions for the HJB equation associated with this optimal stopping problem.
\begin{theorem}\label{thm:optimal stopping regularity}
Suppose $\co$ is a bounded open set  in $\Rn$ with smooth boundary. Assume
\begin{align}
  &a_{ij}\in C^{1,1}(\ob), \quad \mu_i\in C^{0,1}(\ob),  \quad r>0, \quad f \in C(\ob),\label{eqn:coeff conditions}\\
  &a_{ij}\xi_i\xi_j\ge c|\xi|^2, \forall x, \xi\in\ob,\text{ for some }c>0,\label{eqn:ellipticity1}\\
  &g\in C(\ob), \quad g\ge 0\text{ on }\partial \co.\label{eqn:g nonnegative on boundary}
\end{align}
Assume also that there exist a sequence of functions $\{g^\ve\}_{\ve>0}$ and a  constant $M>0$  satisfying
\begin{align}\label{eqn:condition on g}
\begin{cases}
    g^\ve\in C^2(\co)\cap C(\ob), \quad Lg^\ve\ge -M\text{ in }\co,\\
    g^\ve \to g \text{ uniformly in }\ob.
  \end{cases}
\end{align}

   If $v\in C(\ob)$ is a viscosity solution of
\begin{equation}\label{eqn:optimal stopping Dirichlet}
\begin{cases}
  \max\{Lv-f, v-g\}=0&\text{  in } \co, \\
   v=0&\text{ on }\partial \co,
  \end{cases}
  \end{equation}
then $v\in W^{2, p}(\co)$.
\end{theorem}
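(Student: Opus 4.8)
The plan is to obtain $W^{2,p}$ regularity for the optimal stopping value function $v$ via a penalization argument, which is the standard route for variational inequalities of obstacle type. First I would introduce, for each $\ve>0$, a penalty function $\be:\R\to\R$ that is smooth, nondecreasing, convex, with $\be(t)=0$ for $t\le 0$, $\be(t)\to+\infty$ as $\ve\to 0$ for $t>0$, and $\beta_\ve'$ bounded by a constant times $1/\ve$ (the usual choice $\be(t)=t^+/\ve$ mollified, or $\be(t)=e^{t/\ve}-1$ truncated). Then I would consider the penalized Dirichlet problem
\begin{equation*}
\begin{cases}
Lv^\ve+\be(v^\ve-g^\ve)=f & \text{in }\co,\\
v^\ve=0 & \text{on }\partial\co,
\end{cases}
\end{equation*}
where $g^\ve$ is the smooth approximation of $g$ supplied by \eqref{eqn:condition on g}. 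Since the coefficients satisfy \eqref{eqn:coeff conditions}--\eqref{eqn:ellipticity1} and $\beta_\ve$ is smooth and monotone, classical theory (e.g.\ Schauder plus method of continuity, or the $L^p$ theory for the linearized equation combined with a fixed point argument) gives a unique solution $v^\ve\in W^{2,p}(\co)\cap C(\ob)$ for every $p<\infty$, and in fact $v^\ve\in C^{2,\alpha}$ if one wants.

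The core of the argument is a set of a priori estimates uniform in $\ve$. I would establish, in order: (i) a uniform $L^\infty$ bound $\|v^\ve\|_{L^\infty(\co)}\le C$, via the maximum principle using $r>0$ and the boundedness of $f$; (ii) a uniform bound on the penalty term, $\|\be(v^\ve-g^\ve)\|_{L^\infty(\co)}\le C$ — this is the key step and is exactly where the hypothesis $Lg^\ve\ge -M$ enters: at an interior maximum point of $v^\ve-g^\ve$ one has $L(v^\ve-g^\ve)\le Lv^\ve - (-M)$-type control, more precisely one writes $\be(v^\ve-g^\ve)=f-Lv^\ve = f - L(v^\ve-g^\ve)-Lg^\ve \le f + M - L(v^\ve-g^\ve)$ and at a point where $v^\ve-g^\ve$ attains a positive interior max the second-order term has a sign and the zeroth order term $r(v^\ve-g^\ve)\ge 0$, yielding $\be(v^\ve-g^\ve)\le \|f\|_\infty + M + r\|v^\ve-g^\ve\|_\infty \le C$; one also needs to handle the boundary using $g^\ve\to g$ with $g\ge0$ on $\partial\co$, so that $v^\ve-g^\ve\le \text{o}(1)$ there and $\be$ of a small positive number is small; (iii) consequently $\|Lv^\ve\|_{L^\infty(\co)}=\|f-\be(v^\ve-g^\ve)\|_{L^\infty}\le C$, and then the interior-plus-boundary global $L^p$ estimate (\cite[Theorem 9.13 / Lemma 9.17]{GT98}, using the smooth boundary and $C^{1,1}$, hence $W^{1,\infty}$, coefficients) gives $\|v^\ve\|_{W^{2,p}(\co)}\le C_p$ uniformly in $\ve$, for every $p<\infty$.

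With the uniform bound in hand, I would extract a subsequence $v^{\ve_k}\rightharpoonup \bar v$ weakly in $W^{2,p}(\co)$ and (by compact Sobolev embedding) strongly in $C^{1,\alpha}(\ob)$; the limit $\bar v$ lies in $W^{2,p}(\co)$ and satisfies $\bar v=0$ on $\partial\co$. It remains to identify $\bar v$ with $v$. Passing to the limit, the uniform penalty bound forces $(\bar v-g)^+\equiv 0$, i.e.\ $\bar v\le g$; and on the set $\{\bar v<g\}$ the penalty term vanishes in the limit so $L\bar v=f$ there, while everywhere $L\bar v\le f$ (since $\be\ge 0$) — these are exactly the conditions making $\bar v$ a (strong, hence viscosity) solution of \eqref{eqn:optimal stopping Dirichlet}. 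By the uniqueness of viscosity solutions to this variational inequality in the bounded domain $\co$ (\cite[Theorem 3.3]{CIL92}, comparison principle, using $r>0$), $\bar v=v$, and therefore $v\in W^{2,p}(\co)$ for all $p<\infty$.

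I expect the main obstacle to be step (ii), the $\ve$-uniform bound on the penalization term $\be(v^\ve-g^\ve)$: the clean maximum-principle argument sketched above needs $g^\ve$ to be genuinely $C^2$ in $\co$ so that $Lg^\ve$ is defined pointwise, and care is needed near $\partial\co$ where $v^\ve-g^\ve$ need not be nonpositive pointwise (only asymptotically, via $g^\ve\to g$ and $g\ge 0$ on the boundary) — one typically localizes with a barrier or uses that the maximum of $v^\ve - g^\ve$ over $\ob$ is either interior (handled by $Lg^\ve\ge -M$) or on the boundary (handled by uniform convergence $g^\ve\to g\ge 0$), so that in all cases $\sup_{\ob}(v^\ve-g^\ve)^+\to 0$-ish control propagates to control of $\be$. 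A secondary technical point is ensuring the global (up to the boundary) $W^{2,p}$ estimate applies: this is why \eqref{eqn:coeff conditions} demands $a_{ij}\in C^{1,1}$ (so $a_{ij}\in W^{1,\infty}$, enough for the $L^p$ theory) and $\partial\co$ smooth, and these hypotheses should be invoked explicitly when citing the Calderón--Zygmund-type estimate.
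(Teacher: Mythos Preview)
Your outline is essentially the paper's proof: penalize with $Lv^\ve+\be(v^\ve-g^\ve)=f$, bound $\be(v^\ve-g^\ve)$ uniformly by splitting into interior maximum (where $Lg^\ve\ge -M$ does the work) versus boundary, deduce a uniform $W^{2,p}$ bound via Calder\'on--Zygmund, extract a weak limit, identify it with $v$ by viscosity uniqueness. Two points deserve correction.

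First, your boundary handling in step (ii) has a genuine gap. On $\partial\co$ one has $v^\ve-g^\ve=-g^\ve\le |g-g^\ve|\le\omega(\ve)$, where $\omega$ is the modulus of uniform convergence of $g^\ve\to g$. With your generic scaling $\be'\lesssim 1/\ve$ this gives only $\be(\omega(\ve))\lesssim \omega(\ve)/\ve$, which is \emph{not} uniformly bounded unless you know $\omega(\ve)=O(\ve)$ --- and \eqref{eqn:condition on g} only gives $\omega(\ve)\to 0$. Your phrase ``$\be$ of a small positive number is small'' is precisely what fails here. The paper's remedy is to calibrate the penalty to the obstacle approximation: require $0<\be'<\omega(\ve)^{-1}$, so that automatically $\be(\omega(\ve))\le 1$ at boundary maxima. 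This is the only nonstandard ingredient, and it is exactly the device that makes the hypothesis ``$g^\ve\to g$ uniformly'' (with no rate) sufficient.

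Second, a smaller technical point: the penalized solution $v^\ve$ is a priori only in $W^{2,p}$, not $C^2$, so at an interior maximum of $v^\ve-g^\ve$ you cannot literally evaluate $D^2v^\ve$ pointwise. The paper invokes Bony's maximum principle (an $\ess\limsup$ version) to get the sign on the second-order term; your sketch should do the same or else first argue $v^\ve\in C^{2}$ from Schauder once $\be$ and $g^\ve$ are smooth enough.
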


\begin{remark}
Note that Assumption \eqref{eqn:condition on g} is trivially satisfied if $g\in C^2(\ob)$. However,  later we will apply this theorem to $g=\M u$, which is not necessarily in $C^2(\ob)$. In applications, $g^\ve$ can be taken as the usual mollification of $g$, or its slight modification (which may only be in $C^2(\co)\cap C(\ob)$ but not in $C^2(\ob)$, as in Corollary~\ref{cor:optimal stopping regularity} below).
\end{remark}

As a corollary of Theorem~\ref{thm:optimal stopping regularity}, we obtain local $W^{2,p}$ ($n<p<\infty$) regularity of continuous viscosity solutions of
\begin{equation}
\max\{ L v-f, v-g\}=0 \mbox{ in }\Rn. \label{eqn:HJB optimal stopping}
\end{equation}

\begin{corollary}\label{cor:optimal stopping regularity}
Assume that $f\in C(\Rn)$,
  $a_{ij}\in C^{1,1}_{\loc}(\Rn)$, and $\mu,\sigma$ and $g$ are Lipschitz in $\Rn$.
Assume also that for any bounded open set $\co\subset \Rn$ with smooth boundary, there are constants (maybe depending on $\co$) $c>0$ and $M$  such that \eqref{eqn:ellipticity1} and \eqref{eqn:condition on g} are satisfied.

If $v\in C(\Rn)$ is a viscosity solution of \eqref{eqn:HJB optimal stopping}, then $v\in W^{2, p}(\co)$ for any $\co\subset \Rn$ with smooth boundary and any $1\le p<\infty$, and hence also in $C^1(\Rn)$.
\end{corollary}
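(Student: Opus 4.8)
The plan is to deduce the corollary from Theorem~\ref{thm:optimal stopping regularity} by a localization argument, exhausting $\Rn$ by bounded smooth domains and checking that all hypotheses of the theorem are inherited on each such domain. Let $\co\subset\Rn$ be a bounded open set with smooth boundary; we must produce a function that solves a Dirichlet problem of the form \eqref{eqn:optimal stopping Dirichlet} on (a slightly enlarged copy of) $\co$, agrees with $v$, and to which Theorem~\ref{thm:optimal stopping regularity} applies. Since the statement of Theorem~\ref{thm:optimal stopping regularity} requires zero boundary data while here $v$ is merely a global solution on $\Rn$ with no prescribed boundary behavior, the first step is to reduce to the zero-boundary case: fix a ball $B=B_R(0)\supset\ob$, and on $B$ solve the linear Dirichlet problem $Lh=f$ in $B$, $h=v$ on $\partial B$. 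By the classical $L^p$ theory (\cite[Corollary~9.18]{GT98}), using $a_{ij}\in C^{1,1}_{\loc}$, the local ellipticity on $B$, $f\in C(\ob)$ and $v\in C(\partial B)$, this has a unique strong solution $h\in W^{2,p}_{\loc}(B)\cap C(\bar B)$ for every $p<\infty$; by \cite[Theorem~2]{Ishii95} it is also a viscosity solution, hence by the comparison principle \cite[Theorem~3.3]{CIL92} one controls $v-h$ on $B$. Then $\tilde v:=v-h$ is a continuous viscosity solution of $\max\{L\tilde v-\tilde f,\ \tilde v-\tilde g\}=0$ in $B$ with $\tilde v=0$ on $\partial B$, where $\tilde f:=f-rh+L^0 h$ is arranged so that $L\tilde v-\tilde f=L v-f$ (more simply, write the equation for $v$ directly and subtract the equation $Lh=f$, so that the new running cost is $f-Lh+rh$ type — the point is only that it is continuous on $\bar B$) and $\tilde g:=g-h$.

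The second step is to verify the hypotheses of Theorem~\ref{thm:optimal stopping regularity} on $B$ for the shifted data $(\tilde f,\tilde g)$. Conditions \eqref{eqn:coeff conditions} and \eqref{eqn:ellipticity1} hold on $\ob[B]$ by assumption (local $C^{1,1}$ of $a_{ij}$, Lipschitz $\mu$, the assumed ellipticity constant on $B$). The running cost $\tilde f$ is continuous on $\bar B$ since $f$ is continuous and $h\in C(\bar B)$ with $Lh=f$. For the obstacle, $\tilde g=g-h$ is continuous on $\bar B$; on $\partial B$ we do not automatically have $\tilde g\ge 0$, but this is not needed if we instead translate $\tilde v$ by a solution of the obstacle-free equation — alternatively, since the only role of \eqref{eqn:g nonnegative on boundary} in Theorem~\ref{thm:optimal stopping regularity} is to make the zero boundary data compatible with $v\le g$, we may simply enlarge the ball and subtract a harmonic-type corrector so that the obstacle dominates $0$ on the new boundary; I expect this to be a routine but slightly fussy adjustment. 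Finally, condition \eqref{eqn:condition on g}: by hypothesis there is a sequence $g^\ve\in C^2(\co)\cap C(\ob)$ with $Lg^\ve\ge -M$ and $g^\ve\to g$ uniformly; setting $\tilde g^\ve:=g^\ve-h$ gives $\tilde g^\ve\in W^{2,p}_{\loc}\cap C(\bar B)$ with $L\tilde g^\ve=Lg^\ve-Lh\ge -M-\|f\|_{L^\infty(B)}=:-M'$ and $\tilde g^\ve\to\tilde g$ uniformly, so \eqref{eqn:condition on g} holds with a new constant. (If one insists on $g^\ve\in C^2$ literally, mollify $h$ too; the estimate $Lh_\delta\ge -\|f\|_\infty - o(1)$ follows from $Lh=f\in C$.)

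Granting these checks, Theorem~\ref{thm:optimal stopping regularity} yields $\tilde v\in W^{2,p}(B)$ for every $p<\infty$, hence $v=\tilde v+h\in W^{2,p}(B)\subset W^{2,p}(\co)$, and in particular $v\in W^{2,p}(\co)$ for any bounded smooth $\co$ and any $1\le p<\infty$. Taking $p>n$ and applying the Sobolev embedding $W^{2,p}_{\loc}\hookrightarrow C^{1,\alpha}_{\loc}$ with $\alpha=1-n/p$, and letting $\co$ exhaust $\Rn$, gives $v\in C^1(\Rn)$, completing the proof. The main obstacle is not any single estimate but the bookkeeping in the reduction to zero boundary data: one must choose the corrector $h$ so that simultaneously (i) the new running cost stays merely continuous (no loss, since $Lh=f$), (ii) the new obstacle's defining sequence $g^\ve-h$ still has a two-sided-usable bound $L(g^\ve-h)\ge -M'$, and (iii) the sign condition on the obstacle at the boundary is recovered; handling (iii) cleanly — e.g. by enlarging the domain rather than forcing it — is the step that requires the most care.
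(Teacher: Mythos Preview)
Your strategy---subtract a corrector with boundary values equal to $v$, then invoke Theorem~\ref{thm:optimal stopping regularity} on the shifted problem---is exactly the paper's route, but two choices make the paper's execution shorter and remove precisely the difficulty you flag as ``the step that requires the most care.''

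First, the paper takes the corrector to solve the \emph{homogeneous} problem $Lw=0$ in $\co$, $w=v$ on $\partial\co$, rather than $Lh=f$. With right-hand side zero and $a_{ij}\in C^{0,\alpha}$, interior Schauder gives $w\in C^{2,\alpha}_{\loc}(\co)\cap C(\ob)$ directly, so $\bar g^\ve:=g^\ve-w$ is genuinely in $C^2(\co)\cap C(\ob)$ with no need to mollify the corrector, and moreover $L\bar g^\ve=Lg^\ve-Lw=Lg^\ve\ge -M$ with the \emph{same} constant $M$. The running cost is also unchanged, since $L(v-w)-f=Lv-f$.

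Second, and more to the point, your concern about item~(iii) is misplaced. Because $v$ is a viscosity subsolution of $\max\{Lv-f,\,v-g\}=0$, one has $v\le g$ pointwise in $\Rn$. Hence on $\partial\co$, where $w=v$, the shifted obstacle automatically satisfies $\bar g=g-w=g-v\ge 0$; condition~\eqref{eqn:g nonnegative on boundary} comes for free and no domain enlargement or ``harmonic-type corrector'' is needed. With these two simplifications your argument becomes the paper's four-line proof.
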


We defer the proofs of Theorem \ref{thm:optimal stopping regularity}
and its corollary to the appendix, and focus now on proving our main theorem using the above corollary.

\begin{proof}[Proof of Theorem \ref{thm regularity of u}]
Given any bounded open set $\mathcal{O}$ with smooth boundary, we denote by $\C'$ ($\A'$, resp.) the restriction of the continuation (action, resp.) region within $\mathcal{O}$.
Then there exists an open ball $\co'\supset \co$ such that for any $x\in \co$, $u(x+\xi)+B(\xi)\le \M u(x)+1$ implies $x+\xi\in \co'.$
Because in this case,
\[B(\xi)\le \M u(x)-u(x+\xi)+1\le \M u(x)+1\le \sup_{\ob} \M u+1<\infty.\]
But $B(\xi)\toinf$ as $|\xi| \toinf$, which implies that all such $\xi$ are bounded uniformly.

Now we define the set
\begin{align}
  D:=\left\{y\in\mathcal{O}': u(y)<\M u(y)-\frac K 2\right\}.
\end{align}
Clearly, $\overline{D}$ is compact and $\overline{D} \subset \mathcal{C}$. From Lemma \ref{lem regularity in C}, \[u\in C^{2,\alpha}(\overline D).\]

For any $x\in\co$, take a minimizing sequence $\{\xi_k\}$ such that $u(x+\xi_k)+B(\xi_k)\to \M u(x)$. Then $\{\xi_k\}\subset \co'$. Extract a convergent subsequence (still denoted by $\{\xi_k\}$) converging to $\xi^*$.  Because $B(\xi)+B(\xi')\ge K+B(\xi+\xi')$,
   \begin{align*}
     \M u(x)
     &=\inf_{\eta\in\Rn} \{u(x+\xi_k+\eta)+B(\xi_k+\eta)\}\\
     &\le \inf_{\eta\in\Rn}\{u(x+\xi_k+\eta)+B(\eta)\}+B(\xi_k)-K\\
     &=\M u(x+\xi_k)+B(\xi_k)-K\\
     &=\M u(x+\xi_k)-u(x+\xi_k)+[u(x+\xi_k)+B(\xi_k)]-K.
   \end{align*}
Passing to the limit $k\toinf$, we obtain
\[u(x+\xi^*)-\M u(x+\xi^*)\le -K.\]
In particular, $y:=x+\xi^*\in D$.

On the other hand, since $u-\M u$ is uniformly continuous on
$\overline {\mathcal{O}'}$, there exists  $\rho_0>0$ such that
\[|y-y'|\le \rho_0\Rightarrow |u(y')-\M u(y')-(u(y)-\M u(y))|\le \frac K 4.\]
Hence,   for all $\rho\in (0, \rho_0], \lambda\in[-1,1]$ and unit vector $\chi\in\Rn$,
\begin{align*}
y=x+\xi^*\in D,\quad y'=y+ \lambda\rho\chi\in D,
\end{align*}
because $u(y')-\M u(y')\le u(y)-\M u(y)+\frac K 4<-\frac K 2.$

Since $\M u(x\pm \rho\chi)\le u(x\pm \rho\chi+\xi_k)+B(\xi_k)$ for all $k$,
\begin{align*}
&\M u(x+\rho \chi)+\M u(x-\rho \chi)-2\M u(x)\\
\le & u(x+\rho\chi+\xi_k)+u(x-\rho\chi+\xi_k)+2B(\xi_k)-2\M u (x)\\
\to & u(y+\rho\chi)+u(y-\rho\chi)-2u(y), \quad k\toinf,
\end{align*}
and hence the second order difference quotient at $x$
\begin{align*}
  &\frac 1 {\rho^2}[\M u(x+\rho\chi)+\M u(x-\rho\chi)-2\M u(x)]\\
  \le& \frac 1 {\rho^2} [u(y+\rho\chi)+u(y-\rho\chi)-2u(y)]\\
  =&\frac 1 {|\rho|}\int_0^1 \left[(Du (y+\lambda\rho\chi)-Du(y-\lambda\rho\chi)\right]\cdot\chi d\lambda\\
  \le & C_D,
\end{align*}
where $\DS C_D=\sup_{x\in \overline D} |D^2u(x)|\le \|u\|_{C^{2,\alpha}(\overline D)}$.

For simplicity, denote by $g=\M u$ and $g^\ve$ its mollification.
For any $x_0\in \co$,  suppose $B_\theta(x_0)\subset \co$, then for any $\ve\in (0, \frac \theta 2),  \rho\in (0, \rho_0\wedge\frac{ \theta }{2})$, and a unit vector $\chi\in\Rn$,
\begin{align*}
  &\frac1{\rho^2}\left[g^\ve(x_0+\rho\chi) +g^\ve(x_0-\rho\chi) -2 g^\ve (x_0)\right]\\
  =&\frac1{\rho^2}\int_{B_\varepsilon(0)}[g(x_0-z+\rho\chi)+ g(x_0 -z - \rho \chi)-2g(x_0-z)]\eta^\varepsilon(z)\,dz\\
  \le& C_D\int_{B_\varepsilon(0)} \eta^\varepsilon(z)\,dz=C_D.
\end{align*}
Sending $\rho\to0$ we get
\[\chi^\textsf{\textup T} D^2g^\ve(x_0)\chi\le C_D.\]
Hence,
\begin{align*}
  \tr(\sigma(x_0)\sigma(x_0)^\textsf{\textup T}D^2g^\ve(x_0))&=  \tr(\sigma^\textsf{\textup T}(x_0)D^2g^\ve(x_0)\sigma(x_0))\\
  &= \sum_k \sigma_k^\textsf{\textup T} D^2g^\ve \sigma_k\\
  &\le C_D\sum_{i,j}|\sigma_{ij}(x_0)|^2\\
  &\le C,
  \end{align*}
  where $\sigma_k$ is the $k$-th column of the matrix $\sigma$, $\sigma_{ij}$ is the $(i,j)$-th element of $\sigma,$ and the last inequality is due to continuity of $\sigma$.

Note that $|g^\ve(x_0)|+|Dg^\ve(x_0)|\le \|g\|_{W^{1,\infty}({\mathcal{O}})}$ and $\mu(x)$ bounded,  we deduce
\[L g^\ve(x_0)=-\frac 1 2\tr\left(\sigma(x_0)\sigma(x_0)^\textsf{\textup T} D^2g^\ve(x_0)\right)-\mu(x_0) \cdot Dg^\ve(x_0)+rg^\ve(x_0)\ge -M,\]
where the constant $M$ is independent of $x_0.$

Finally, recall that $u$ is a viscosity of \eqref{local equation} by Corollary~\ref{cor relate to diffusions}.
We can apply Corollary~\ref{cor:optimal stopping regularity} with $f$ replaced by $\tilde f=f+Iu\in C(\Rn)$ and $g=\M u$ and conclude that for any $1\le p<\infty$,
\[u\in W^{2,p}(\co).\]
\end{proof}

\nocite{GM02}
\bibliographystyle{abbrv}

\appendix
\section{Proof of Theorem \ref{thm:optimal stopping regularity}}
A standard technique to get regularity is to consider a sequence of penalized problems.
For this, let $\be$ denote a sequence of smooth functions satisfying
\begin{align}\label{eqn:penalization functions}
 \begin{cases}
    \be(t)\to \infty,\text{ as }\ve\to 0, t>0; \\
    \be(t)\to 0, \text{ as } \ve \to 0, t\le 0;\\
    0<\be'(t)<\omega(\ve)^{-1}, \forall t;\\
     \be(0)=0, \be\ge -1.
  \end{cases}
\end{align}
Here $\omega(\cdot)$ is the modulus of continuity for the convergence $g^\ve\to g$, i.e.,
\[\omega(\delta):=\sup_{\ve\le \delta} \|g^\ve-g\|_{C(\ob)}.\]
Thus, $\omega(\ve)\to 0$ as $\ve \to 0$.
For a given $\ve>0$, the graph of $\be$ is shown in Figure~\ref{fig:penalization functions}.

\begin{figure}
\centering 
  \setlength{\unitlength}{1bp}%
  \begin{picture}(302.80, 217.39)(0,0)
  \put(0,0){\includegraphics{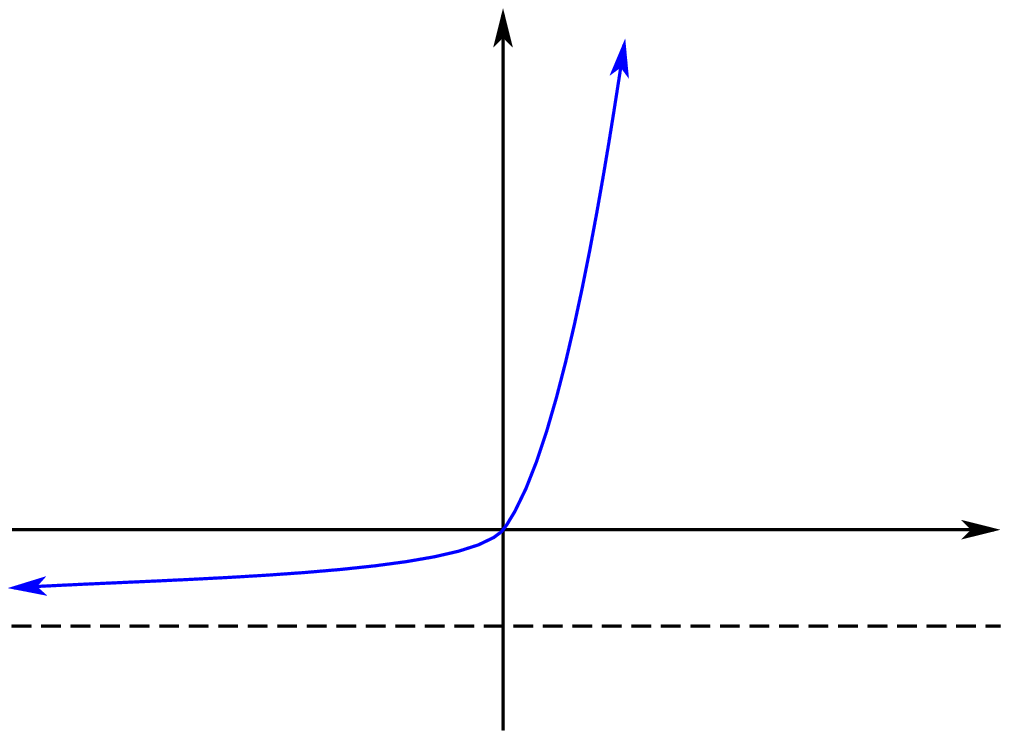}}
  \put(279.01,46.52){\fontsize{14.23}{17.07}\selectfont $t$}
  \put(179.67,160.91){\fontsize{14.23}{17.07}\selectfont $\be$}
  \put(153.78,26.65){\fontsize{8.54}{10.24}\selectfont $-1$}
  \put(150.17,54.34){\fontsize{8.54}{10.24}\selectfont $0$}
  \end{picture}%
\caption{\label{fig:penalization functions}%
 Penalizing Functions}
\end{figure}

We approximate the Dirichlet problem \eqref{eqn:optimal stopping Dirichlet} with the following penalizing problems:
  \begin{equation}\label{eqn:penalizing}
   \begin{cases}
  L  \veps +\be(\veps- g^\ve)=f &\text{ in } \co,\\
   \veps=0 &\text{ on }\partial\co.
  \end{cases}
\end{equation}
By a standard fixed point argument,  \eqref{eqn:penalizing} has a unique solution in $W^{2, p}(\co)\cap W^{1, p}_0(\co)$, for any $1\le p<\infty$. Moreover, we have the following estimates:
\begin{lemma}\label{lem:penalizing}
Under the same assumptions as Theorem~\ref{thm:optimal stopping regularity}, there exists a constant $C$ independent of $\ve$, such that
\[\|\veps\|_{W^{2,p}(\co)}\le C.\]
\end{lemma}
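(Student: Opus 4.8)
The plan is to reduce everything to a single uniform (in $\ve$) bound on the penalty term $\be(\veps-g^\ve)$, and then invoke classical linear $L^p$ theory. Since $\co$ is bounded, $W^{2,p'}(\co)\hookrightarrow W^{2,p}(\co)$ whenever $p'\ge p$, so it suffices to prove the estimate for $p>n$; for such $p$, Sobolev embedding gives $\veps\in C^{1,\alpha}(\ob)$ with $\veps=0$ on $\partial\co$ and $D\veps$ continuous, which is all the regularity needed below.

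\textbf{Key step (uniform bound on the penalty term).} Write $\psi:=\veps-g^\ve$. Subtracting $Lg^\ve\ge -M$ from $L\veps+\be(\psi)=f$ gives, a.e.\ in $\co$,
\[
-a_{ij}D_{ij}\psi-\mu_i D_i\psi+r\psi\ \le\ f-\be(\psi)+M .
\]
Let $\bar\psi:=\max_{\ob}\psi$, attained at $x_0$. If $x_0\in\partial\co$, then $\psi=-g^\ve$ there, and since $g\ge0$ on $\partial\co$ and $\|g^\ve-g\|_{C(\ob)}\le\omega(\ve)$ we get $\psi\le\omega(\ve)$ on all of $\ob$; because $\be(0)=0$ and $\be'\le\omega(\ve)^{-1}$, this forces $\be(\psi)\le\be(\omega(\ve))\le1$ everywhere. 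If instead $x_0\in\co$ and $\bar\psi>0$, then $D\psi(x_0)=0$ (as $\psi$ is $C^1$ near $x_0$), and by Bony's maximum principle for $W^{2,p}_{\loc}$ functions with $p\ge n$ (\cite{Bony67,Lions83c}) there is a sequence of differentiability points $x_k\to x_0$ with $a_{ij}(x_k)D_{ij}\psi(x_k)\le\varepsilon_k\to0$; plugging $x_k$ into the displayed inequality and letting $k\to\infty$ (using continuity of $\psi$, $D\psi$, $f$ and $\be$) yields $\be(\bar\psi)\le\|f\|_{C(\ob)}+M-r\bar\psi\le\|f\|_{C(\ob)}+M$. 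Since $\be$ is increasing and $\psi\le\bar\psi$, in every case (including the trivial $\bar\psi\le0$, where $\be(\psi)\le\be(0)=0$) we obtain, for all small $\ve$,
\[
\be(\veps-g^\ve)\ \le\ C_1:=\max\bigl\{1,\ \|f\|_{C(\ob)}+M\bigr\},
\]
while the lower bound $\be\ge-1$ is built into \eqref{eqn:penalization functions}. Hence $\|\be(\veps-g^\ve)\|_{L^\infty(\co)}\le C_1$ uniformly in $\ve$.

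\textbf{Conclusion.} Setting $h^\ve:=f-\be(\veps-g^\ve)$, which is now bounded in $L^\infty(\co)$ uniformly in $\ve$, the function $\veps$ solves the \emph{linear} Dirichlet problem $L\veps=h^\ve$ in $\co$, $\veps=0$ on $\partial\co$, whose coefficients are $\ve$-independent and satisfy \eqref{eqn:coeff conditions} and \eqref{eqn:ellipticity1}. The Aleksandrov--Bakelman--Pucci estimate (\cite[Thm.\ 9.1]{GT98}), or the same Bony argument at an extremum of $\veps$, gives $\|\veps\|_{L^\infty(\co)}\le C_2$ uniformly. Then the global Calder\'on--Zygmund estimate for the Dirichlet problem (\cite[Thm.\ 9.13]{GT98}), whose constant depends only on $n$, $p$, $\co$, the ellipticity constant and the (finite, $\ve$-independent) norms of the coefficients of $L$, yields
\[
\|\veps\|_{W^{2,p}(\co)}\ \le\ C\bigl(\|\veps\|_{L^p(\co)}+\|h^\ve\|_{L^p(\co)}\bigr)\ \le\ C\bigl(C_2+\|f\|_{C(\ob)}+C_1\bigr)\,|\co|^{1/p},
\]
which is the asserted bound, with a constant independent of $\ve$.

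\textbf{Main obstacle.} The only non-routine point is the uniform bound on $\be(\veps-g^\ve)$. An $L^\infty$ bound on $\psi=\veps-g^\ve$ alone is useless, since $\be$ itself depends on $\ve$ and blows up as $\ve\to0$, so the estimate must be extracted at the maximum of $\psi$, exploiting simultaneously $Lg^\ve\ge -M$, the sign $r>0$, and the calibration $\be'\le\omega(\ve)^{-1}$ of the penalization (to absorb the boundary contribution, where $g^\ve$ is only known to be close to a nonnegative function). The mere $W^{2,p}$-regularity of $\veps$ is what makes Bony's maximum principle, rather than a naive pointwise computation at $x_0$, unavoidable here.
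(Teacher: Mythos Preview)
Your proof is correct and follows essentially the same approach as the paper: reduce to a uniform $L^\infty$ bound on the penalty term $\be(\veps-g^\ve)$ by locating the maximum of $\psi=\veps-g^\ve$, handling the boundary case via the calibration $\be'\le\omega(\ve)^{-1}$ and the interior case via Bony's maximum principle combined with $Lg^\ve\ge -M$, then concluding with Calder\'on--Zygmund. If anything, your write-up is slightly more careful than the paper's, which invokes ``the Calderon--Zygmund estimate'' without explicitly securing the intermediate $L^\infty$ bound on $\veps$ that you obtain via ABP.
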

\begin{proof}
The goal is to show that
\begin{equation}\label{eqn:bounded be}
  \|\be(\veps-g^\ve)\|_{L^\infty(\co)} \le C.
\end{equation}
Clearly, to get \eqref{eqn:bounded be}, it suffices to show
\begin{equation}\label{eqn:upper bound of be}
  \be(\veps-g^\ve)  \le C,
\end{equation}
since $\be\ge -1$ by construction.

Consider the point $x_0$ at which the maximum of  $\be(\veps-g^\ve)$ occurs.

\emph{Case 1. } If $x_0\in \partial \co$, then $\veps (x_0)=0\le g(x_0)$. Thus
\begin{align*}
\veps(x_0)- g^\ve(x_0)&\le g(x_0)-g^\ve(x_0)\le \omega(\ve).
\end{align*}
Because $0<\be' <\omega(\ve)^{-1}$ and  $\be(0)=0$, we have $\be(\veps-g^\ve)\le 0$, if $\veps-g^\ve\le 0$; or $0\le \be(\veps-g^\ve)\le 1$ if $\veps-g^\ve\ge 0$. In either case,
we have $\be(\veps-g^\ve) \le 1$ at $x_0$.

\emph{Case 2. } If $x_0\in \co$, then $\veps-g^\ve$ also attains its maximum there, since $\be'>0$. By Bony's maximum principle (see \cite{Bony67} or \cite{Lions83c}),
\[\ess\lim_{x\to x_0}\sup \{-a_{ij}(v^\ve-g^\ve)_{x_ix_j}\}\ge 0.\]
Thus, either $v^\ve-g^\ve\le 0$ at $x_0$, which implies  $\be(\veps-g^\ve) \le 0$, or,
\[\ess\lim_{x\to x_0}\sup Lv^\ve\ge L g^\ve(x_0)\ge -M,\]
due to \eqref{eqn:condition on g}. By continuity,
\begin{align*}
\be(\veps-g^\ve)(x_0) = f(x_0)- \ess\lim_{x\to x_0}\sup L \veps\le f(x_0)+M\le C.
\end{align*}

Combing these two cases, we obtain \eqref{eqn:upper bound of be}.
Thus, we proved \eqref{eqn:bounded be}, and from the PDE \eqref{eqn:penalizing} we have
$\|L \veps\|_{L^\infty(\co)}\le C$ independently of $\ve$.

Finally, the Calderon-Zygmund estimate implies the desired result.
\end{proof}

Thanks to Lemma~\ref{lem:penalizing}, we can extract a subsequence of $\veps$, denoted by $\vek$, such that
\begin{align*}
   \begin{cases}
    \vek \rightharpoonup \bar v \text{ weakly in } W^{2,p}(\co),\\
    \vek \to \bar v \text{ uniformly in }\co.
  \end{cases}
\end{align*}
Due to the stability result of viscosity solutions (see, for instance,\cite[Theorem 8.3]{Crandall97}),
this limit function $\bar v$ is in fact a viscosity solution of \eqref{eqn:optimal stopping Dirichlet}.

\begin{lemma}\label{lem:limit function}
The limit function $\bar v$ is a viscosity solution of \eqref{eqn:optimal stopping Dirichlet}.
\end{lemma}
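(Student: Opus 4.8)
The plan is to pass to the limit in the penalized equation \eqref{eqn:penalizing} and show that the uniform limit $\bar v$ obtained from Lemma~\ref{lem:penalizing} satisfies the two one-sided viscosity inequalities in the interior together with the zero boundary condition. The boundary condition is immediate since each $\veps$ vanishes on $\partial\co$ and the convergence $\vek\to\bar v$ is uniform on $\ob$. For the interior, I would invoke the standard stability theorem for viscosity solutions (for instance \cite[Theorem 8.3]{Crandall97}, or the $L^p$-viscosity stability results), noting that $\vek\rightharpoonup\bar v$ weakly in $W^{2,p}(\co)$ and uniformly, while the coefficients of $L$ and the data $f$ are fixed, and $g^{\ve_k}\to g$ uniformly. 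The only non-standard feature is the penalty term $\be(\veps-g^\ve)$, which must be controlled in the limit.

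The key steps, in order, are as follows. First I would observe that each $\veps\in W^{2,p}(\co)\cap W^{1,p}_0(\co)$ with $p>n$ is, by Sobolev embedding, in $C^{1,\alpha}(\ob)$, and is a strong (hence, since the coefficients are continuous, a viscosity) solution of \eqref{eqn:penalizing}. Second, for the supersolution property: since $\be\ge -1$, we have $L\veps = f-\be(\veps-g^\ve)\le f+1$, so passing to the limit gives $L\bar v\le f$ in the viscosity sense; combined with $L\bar v\le f$ this already yields $\max\{L\bar v-f,\bar v-g\}\le 0$ provided we also show $\bar v\le g$. The latter follows because, by Lemma~\ref{lem:penalizing} and the PDE, $\be(\veps-g^\ve)$ is bounded by a constant $C$ independent of $\ve$, while $\be'(t)<\omega(\ve)^{-1}$ and $\be(0)=0$ force $\veps-g^\ve\le C\omega(\ve)$ on the set where $\veps>g^\ve$; letting $\ve\to 0$ and using $g^\ve\to g$ uniformly gives $\bar v\le g$. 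Third, for the subsolution property: at any point where $\bar v<g$ strictly, uniform convergence gives $\vek<g^{\ve_k}$ in a neighborhood for $k$ large, so $\be(\vek-g^{\ve_k})\to 0$ there (by the second and fourth lines of \eqref{eqn:penalization functions}), and the equation degenerates to $L\vek=f-o(1)$; stability then yields $L\bar v\le f$ at such points, i.e. $\max\{L\bar v-f,\bar v-g\}\le 0$ wherever $\bar v<g$, which together with $\bar v\le g$ everywhere gives the full subsolution inequality. Thus $\bar v$ is a viscosity solution of \eqref{eqn:optimal stopping Dirichlet}.

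The main obstacle I anticipate is making the passage to the limit in the penalty term rigorous at a contact point of a test function, since $\be(\vek-g^{\ve_k})$ need not converge pointwise in a controlled way where $\bar v=g$. I would handle this by only using the penalty term's sign for the supersolution inequality (where $\be\ge-1$ suffices and no pointwise limit is needed) and, for the subsolution inequality, by restricting attention to the open set $\{\bar v<g\}$ where the uniform estimates force $\be(\vek-g^{\ve_k})\to 0$ locally uniformly. On the complementary (closed) set $\{\bar v=g\}$ the subsolution inequality holds trivially because $\bar v-g=0$ there. A secondary point of care is that stability of viscosity solutions requires the limiting function to be continuous, which is guaranteed here by the uniform convergence $\vek\to\bar v$; and that the penalized problems are genuinely solvable in $W^{2,p}$, which was already asserted via a fixed-point argument preceding Lemma~\ref{lem:penalizing}.
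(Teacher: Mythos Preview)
Your overall strategy---pass to the limit in the penalized equation using the uniform $W^{2,p}$ bound from Lemma~\ref{lem:penalizing}, split the domain into $\{\bar v<g\}$ and $\{\bar v=g\}$, and invoke stability---is reasonable and close in spirit to the paper's argument. However, there is a genuine gap, and it stems from a reversal of the sub-/supersolution roles.

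In the convention of Definition~\ref{def 1 of viscosity}, a \emph{subsolution} must satisfy $\max\{L\phi-f,\,\bar v-g\}\le 0$ at every test point, i.e.\ \emph{both} $L\phi\le f$ and $\bar v\le g$; a \emph{supersolution} must satisfy $\max\{\cdot,\cdot\}\ge 0$, i.e.\ at least one of $L\phi\ge f$ or $\bar v\ge g$. Hence it is the \emph{supersolution} inequality that is automatic on the contact set $\{\bar v=g\}$, not the subsolution. Your last paragraph asserts the opposite (``on the complementary set $\{\bar v=g\}$ the subsolution inequality holds trivially because $\bar v-g=0$''), and this is precisely where the argument fails: at a contact point one still has to establish $L\phi\le f$, and nothing in your outline does so. Your Step~2 attempts this globally, but from $\be\ge -1$ you only obtain $L\veps\le f+1$, hence in the limit $L\bar v\le f+1$ in the viscosity sense, not $L\bar v\le f$; the extra constant $1$ cannot be absorbed. (The labels ``supersolution'' and ``subsolution'' in your Steps~2 and~3 are themselves interchanged, which is likely the origin of the confusion.)

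The paper closes exactly this gap by arguing directly with test functions rather than through a set decomposition. At a local maximum $x_0$ of $\bar v-\phi$ one selects approximating maxima $x_k$ of $\vek-\phi$, applies Bony's maximum principle to the strong solution $\vek\in W^{2,p}$, and reads off
\[
L\phi(x_k)\;\le\; f(x_k)\;-\;\bek\!\bigl(\vek(x_k)-\gek(x_k)\bigr)\;-\;r\bigl(\vek(x_k)-\phi(x_k)\bigr).
\]
Passing to the limit along $x_k\to x_0$, the penalty term first forces $\bar v(x_0)\le g(x_0)$ (otherwise the right-hand side tends to $-\infty$); and once $\bar v(x_0)-g(x_0)\le 0$, the property $\be(t)\to 0$ for $t\le 0$ as $\ve\to 0$ gives $L\phi(x_0)\le f(x_0)$. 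This pointwise use of the penalty's sign at the approximating extrema is exactly the ingredient your decomposition approach is missing at the contact set.
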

\begin{proof}
(Subsolution) Suppose $\phi(x)$ is a smooth test function and $\bar v-\phi$ has a local maximum at $x_0\in \co$ with $\bar v(x_0)=\phi(x_0)$. We want to show that
\[\max\{L \phi -f, \phi- g\}\le 0 \text{ at } x_0.\]

Without loss of generality, we may assume $x_0$ is a strict local maximum because otherwise we can replace $\phi(x)$ by $\phi(x)-|x-x_0|^4$ and prove the same result. In this case, for any open ball with radius $\delta>0$ and centered at $x_0$, denoted by $B_\delta(x_0)$, $\vek-\phi$ has a local maximum $x_k\in B_\delta (x_0)$ for $k$ sufficiently large, because $\vek$ converges to $v$ uniformly in $B_\delta(x_0)$ and $\bar v(x_0)-\phi(x_0)>\max_{\partial B_\delta(x_0)} (\bar v-\phi)$. Let $\delta$ go to zero, and we obtain
\[x_k \text{ is  a local maximum of }\vek-\phi,\quad \text{ and }\lim_{k\toinf}x_k=x_0.\]

By the maximum principle, $\ess\lim\limits_{x\to x_k}\sup \{-a_{ij}(\vek-\phi)_{x_ix_j}\}\ge 0$ at $x_k$. However, $\vek$ satisfies \eqref{eqn:penalizing}, thus,
\begin{align}
L\phi(x_k)&\le \ess\lim_{x\to x_k}\sup L \vek -r(\vek(x_k)-\phi(x_k))\nonumber\\ &=f(x_k)-\bek(\vek-\gek)(x_k)-r(\vek(x_k)-\phi(x_k)). \label{eqn:at each xk}
\end{align}
Note that $\vek-\gek\to \bar v- g$ and $\vek-\phi\to \bar v- \phi$ locally uniformly, we have
\begin{equation}\label{eqn:convergent sequence}
\begin{cases}
 \vek(x_k)-\gek(x_k)\to \bar v(x_0)-g(x_0), \\
  \vek(x_k)-\phi(x_k)\to \bar v(x_0)-\phi(x_0)=0,
\end{cases}
\text{ as } k\toinf.
 \end{equation}
Sending $k\toinf$ in \eqref{eqn:at each xk}, we obtain
$$\phi -g=\bar v -g \le 0  \text{ at }x_0.$$
(Otherwise, $L\phi\le -\infty$ at $x_0$ since $\be(t)\toinf$ as $\ve\to0$ if $t>0$). Moreover,
\[L\phi\le f \text{ at }x_0,\]
since $\be(t)\to 0$ as $\ve\to 0$ if $t\le 0$. We have proved that $\bar v$ is a viscosity subsolution.

(Supersolution) Similarly, suppose $\phi(x)$ is a smooth test function and $\bar v-\phi$ has a local strict minimum at $x_0$ with $\bar v(x_0)=\phi(x_0)$. We want to show that
\[\max\{L \phi -f, \phi- g\} \ge 0  \text{ at } x_0.\]
For the same reason as above, we can take a sequence $\{x_k\}$ so that
\[x_k \text{ is  a local minimum of }\vek-\phi,\quad x_k\to x_0,\]
and \eqref{eqn:convergent sequence} still holds.
And again by maximum principle, at $x_k$,
\begin{align}\label{eqn:at each xk 2}
L\phi\ge  f-\bek(\vek-\gek)-r(\vek-\phi).
\end{align}

If $\phi- g\ge 0$ at $x_0$, then we have the desired inequality. Otherwise, $\phi(x_0)- g(x_0)=-2\nu$ for some  $\nu>0$. So $\vek(x_k)-\gek(x_k)\le -\nu<0$ for $k$ sufficiently large by \eqref{eqn:convergent sequence}, and hence sending $k\toinf$ in \eqref{eqn:at each xk 2} yields
\[L\phi \ge f \text{ at }x_0.\]
Thus, $\bar v$ is a supersolution.

Finally, the boundary condition is satisfied since $\bar v=0$ on $\partial\co$.
\end{proof}

\begin{proof}[Proof of Theorem~\ref{thm:optimal stopping regularity}]
Because of the above Lemma~\ref{lem:limit function} and the uniqueness of viscosity solutions for \eqref{HJB}, we conclude that
$v=\bar v \in W^{2, p}(\co)$.
\end{proof}

\begin{proof}[Proof of Corollary \ref{cor:optimal stopping regularity}.]
To apply Theorem~\ref{thm:optimal stopping regularity}, we need to subtract from $v$ a function that has the same boundary value on $\partial\co$. Let  $w$ be the unique solution of the Dirichlet problem
\begin{align*}
   \begin{cases}
    Lw=0 &\text{ in }\co,\\
    w=v &\text{ on }\partial \co.
  \end{cases}
\end{align*}
Then $w\in C^{2,\alpha}_{\loc}(\co)\cap C(\overline\co)$. Thus, $v_0:= v-w$ is a viscosity solution of
\begin{equation}
  \max\{L v_0 -f, v_0- \bar g \}=0 \text{ in } \co,\quad
  v_0=0 \text{ on }\partial\co,
\end{equation}
where $\bar g=g-w$. Then $\bar g=g-v \ge 0$ on $\partial \co$. Take $\bar g^\ve=g^\ve -w\in C^2(\co)\cap C(\ob)$, satisfying $L\bar g^\ve=L g^\ve\ge -M$ in $\co$.  All the other conditions of Theorem~\ref{thm:optimal stopping regularity} are easily verified. So we have $v_0\in W^{2, p}(\co)$ and $v=v_0+w\in W^{2,p}_{\loc}(\co)$. But since $\co\subset \Rn$ is arbitrary, we have $v\in W^{2,p}(\co)$ for any $\co\subset \Rn$.
\end{proof}

\end{document}